

\documentclass[amssymb,10pt]{amsart}
\usepackage{latexsym}
\usepackage{amssymb}
\headheight=7pt
\textheight=620pt
\textwidth=432pt
\topmargin=14pt
\oddsidemargin=18pt
\evensidemargin=18pt

\newtheorem{theorem}{Theorem}
\newtheorem{propo}{Proposition}[section]

\newtheorem{lemma}[propo]{Lemma}
\newtheorem{corol}[propo]{Corollary}
\newtheorem{corollary}[theorem]{Corollary}

\newtheorem{remark}[propo]{Remark}

\newcommand{\GL}{\operatorname{GL}}

\newcommand{\tr}{{\mathrm {tr}}}

\newcommand{\Spec}{{\mathrm {Spec}}}

\newcommand{\CC}{{\mathbb C}}

\newcommand{\QQ}{{\mathbb Q}}
\newcommand{\ZZ}{{\mathbb Z}}
\newcommand{\NN}{{\mathbb N}}

\newcommand{\FF}{{\mathbb F}}

\newcommand{\PSp}{\mathrm{PSp}}

\def\F{\mathbb F}
\def\N{\mathbb N}
\def\l{\lambda}

\marginparsep-0.5cm

\parskip 1mm

\footnotesep6.5pt

\begin{document}
\title [Zero-one generation laws for finite simple groups]
{Zero-one generation laws for finite simple groups}
\author{Robert M. Guralnick}
\address{Department of Mathematics, University of Southern California,
Los Angeles, CA 90089-2532, USA}
\email{guralnic@usc.edu}
 \author{Martin W. Liebeck}
\address{Department of Mathematics,
 Imperial College, London SW7 2BZ, UK}
\email{m.liebeck@imperial.ac.uk}
\author{Frank L\"ubeck}
\address{ Lehrstuhl D f\"ur Mathematik, Pontdriesch 14/16, 
52062 Aachen, Germany}
\email{ Frank.Luebeck@Math.RWTH-Aachen.De}
\author{Aner Shalev}
\address {Einstein Institute of Mathematics, The Hebrew University of 
Jerusalem, Jerusalem, 91904, Israel}
\email{shalev@math.huji.ac.il}


\begin{abstract}
Let $G$ be a simple algebraic group over the algebraic closure of $\FF_p$ ($p$ prime), and 
let $G(q)$ denote a corresponding finite group of Lie type over $\FF_q$, where $q$ is a power of $p$.
Let $X$ be an irreducible subvariety of $G^r$ for some $r\ge 2$. We prove a zero-one law for the probability that $G(q)$ is generated by a random $r$-tuple in $X(q) = X\cap G(q)^r$: the limit of this probability as $q$ increases (through values of $q$ for which $X$ is stable under the Frobenius morphism defining $G(q)$) is either 1 or 0. Indeed, to ensure that this limit is 1, one only needs $G(q)$ to be generated by an $r$-tuple in $X(q)$ for two sufficiently large values of $q$. 
We also prove a version of this result where the underlying characteristic is allowed to vary.

In our main application, we apply these results to the case where $r=2$ and the irreducible subvariety $X = C\times D$, a  product of two conjugacy classes of elements of finite order in $G$. This leads to new results on random $(2,3)$-generation of finite simple groups $G(q)$ of exceptional Lie type: provided $G(q)$ is not a Suzuki group, we show that the probability
that a random involution and a random element of order 3
generate $G(q)$ tends to $1$ as $q \rightarrow \infty$.
Combining this with previous results for classical groups, this shows that
finite simple groups (apart from Suzuki groups and 
$\PSp_4(q)$) are randomly $(2,3)$-generated.

Our tools include algebraic geometry, representation theory of algebraic groups, and character theory of finite groups of Lie type. 
\end{abstract}

\date{\today}



\thanks{The first author acknowledges the support of the NSF 
grant DMS-1600056. 
    The third author was supported by project B3 of SFB-TRR 195 `Symbolic
    Tools in Mathematics and their Application' of the German Research 
    Foundation (DFG).
The fourth author acknowledges the support of ISF grant 686/17 and the Vinik chair of mathematics which he holds. The authors also acknowledge the support of the National Science Foundation under Grant No. DMS-1440140 while they were in residence at the Mathematical Sciences Research Institute in Berkeley, California, during the Spring 2018 semester.
We also thank Michael Larsen for helpful discussions.}

\maketitle

\section{Introduction}

Let $p$ be a prime, let $k$ be the algebraic closure of the prime field $\FF_p$, and let $G$ be a simple, simply connected algebraic group over $k$. For a power $q = p^a$, denote by 
$G(q)$ a corresponding finite group of Lie type (twisted or untwisted). For a positive integer $r$, write $G^r$ for the direct product of $r$ copies of $G$. 
In this paper we prove a zero-one generation law for irreducible subvarieties $X$ of $G^r$. Let $X(q) = X\cap G(q)^r$, let $Q$ be the set of values of $q$ for which $X$
is stable under the Frobenius morphism defining $G(q)$, and   denote by $P_{G(q)}(X(q))$ the probability that an $r$-tuple chosen uniformly at random from $X(q)$ generates $G(q)$. Theorem \ref{newmain1} below shows that 
the limit ${\rm lim}_{q \in Q}P_{G(q)}(X(q))$ must be either 1 or 0. Moreover, it is 1 provided that $X(q)$ generates $G(q)$ for at least two values of $q$, one of which is greater than a specified constant. 

In order to study these limiting generation probabilities, one needs similar results for groups of fixed Lie type in varying characteristics. Theorem \ref{newmain2} below is such a result. Here $G$ is taken to be a Chevalley group scheme (defined over $\ZZ$) and $X$ to be an irreducible subscheme defined over a ring of the form ${\mathcal O}[\frac{1}{n}]$, where ${\mathcal O}$ is the ring of integers of an algebraic number field, and a similar zero-one law is obtained, providing finitely many characteristics are excluded. 


Let $T$ be a finite non-abelian simple group. A non-empty subset 
of $T$ is called normal if it is closed under conjugation by elements 
of $T$ (in other words, it is a union of conjugacy classes).  
Let $C,D$ be two normal subsets of $T$ .
Let $P_T(C,D)$  denote the probability that a random
pair of elements in $C \times D$ generates $T$ (that is, the proportion
of generating pairs among all pairs in $C \times D$).

If $C$ is the set of elements in $T$ of order $r$ and $D$ is the set of
elements of order $s$, then we define $P_{r,s}(T)=P_T(C,D)$.  
This has been studied extensively in recent years.  
Previous results include:

\begin{itemize}

\item[(1)] \cite{KL, lsprob} $P_T(T,T) \rightarrow 1$ for simple groups $T$, as 
$|T| \rightarrow \infty$;

\item[(2)] \cite{lish23}  $P_{2,3}(T) \rightarrow 1$ as 
$|T| \rightarrow \infty$ for classical groups $T$ not of type $\PSp_4$, 
and for alternating groups;

\item[(3)] \cite{lishrs}  Let $r,s$ be primes with $(r,s) \ne (2,2)$. Then
$P_{r,s}(T) \rightarrow 1$ as $|T| \rightarrow \infty$ for classical 
groups $T$ of large Lie rank (depending upon $(r,s)$), and for alternating groups;

\item[(4)]  \cite{BGK,GK}   In every simple group $T$ there exists a conjugacy class $C$  such that for any non-empty normal subset 
$D \subset T \setminus \{ 1 \}$, we have $P_T(C,D) \ge \frac{13}{42}$.
\end{itemize}

Most of the work in proving such results involves the analysis
of the simple groups of Lie type.  This typically splits into two cases  
--- the first when  the rank of the group goes to infinity and the second when the rank
is fixed and the field size $q$ goes to infinity.
The methods are usually quite different.  In the first case,
one can avoid exceptional groups.  

For example, by (3) above $P_{r,s}(T) \rightarrow 1$ as the rank of the
classical group $T$ goes to infinity. But if the rank is fixed, and
is not large enough (given $r,s$) this may fail: first of all, elements
of orders $r,s$ need not exist, since $r$ and $s$ may not divide $|T|$. 
But it may even fail with this divisibility assumption.  For example,
$P_{2,3}(\PSp_4(q)) \rightarrow \frac{1}{2}$ if $(q,6) = 1$ and 
$q \rightarrow \infty$, and $P_{2,3}(\PSp_4(q)) =0$ if $(q,6) \ne 1$ (see \cite{lish23}).

In the case of fixed rank, we shall use Theorems \ref{newmain1} and \ref{newmain2} 
to show that if $G(q)$ is a finite group of fixed Lie type over $\FF_q$, and $C,D$ are conjugacy classes of the
corresponding simple algebraic group, then $P_{G(q)}(C(q),\,D(q))$ is either $0$ or 
is at least  $1 - O(q^{-\frac{1}{2}})$ (so in particular goes to $1$ as 
$q \rightarrow \infty$) -- see Corollaries \ref{mainp}, \ref{mainvary}. Note that there are several very well-known zero-one laws in probability, such as those of Borel-Cantelli, Kolmogorov and Levy.

As another application, we  show in Corollary \ref{prob} that $P_{G(q)}(G(q),\,G(q)) \rightarrow 1$
as $q\to \infty$  -- that is, $G(q)$ is generated by a random pair of elements with probability tending to 1.  This was originally proved in \cite{KL} for $G(q)$ of classical type and in \cite{lsprob} for $G(q)$ of exceptional type, using completely different methods.

Now let $r,s$ be positive integers, and  consider elements of orders $r,s$ in $G(q)$. Since
there are only finitely many conjugacy classes of such elements in
the corresponding simple algebraic group, it follows from the above result that the limit points as $q \to \infty$ for 
$P_{r,s}(G(q))$ form a finite set of rational numbers between $0$ and $1$. Usually,
this set is expected to be $\{1\}$, but there are exceptions -- for example, as mentioned above, for $G(q) = \PSp_4(q)$ 
and $(r,s)=(2,3)$  it is $\{0,\frac{1}{2},1\}$.  

There is particular interest in the case where $(r,s) = (2,3)$, one reason being that the $(2,3)$-generated groups are precisely the images of the modular group $PSL_2(\ZZ)$. For alternating and classical groups $G$, the limiting behaviour of $P_{2,3}(G)$ was determined in \cite{lish23}. Here we shall use the results discussed above to complete the picture for exceptional groups in Theorem \ref{23excep}.

We now state our main results. 
As above, let $G$ be a fixed type of simply connected simple algebraic
group over $k$, the algebraic closure of $\FF_p$. For $q$ a power of $p$, 
let $F_q$  be a Frobenius  endomorphism of $G(k)$
such that the fixed point group $G^{F_q} = G(q)$ is a finite group of Lie type over $\FF_q$. Here $G(q)$ may be of twisted or untwisted type, but we adopt the assumption in the statements of the results below that as $q$ varies, 
\begin{center}
{\it either all $G(q)$ are of untwisted type, or all are of a fixed twisted type}.
\end{center}
For Suzuki and Ree groups $^2\!B_2(q)$, $^2\!G_2(q)$, $^2\!F_4(q)$, our notation for $q$ denotes the relevant odd power of 2 or 3, rather than its square root.

For a positive integer $r$, denote by $G^r$ the direct product of $r$ copies of $G$. 
If $Y$ is a subset of $G(q)^r$ for some $r\ge 2$, we write $P_{G(q)}(Y)$ for the probability that a random $r$-tuple in $Y$ generates $G(q)$. 
Also for an $r$-tuple $y=(y_1,\ldots ,y_r) \in  Y$, we write $\langle y\rangle$ to denote the subgroup $\langle y_1,\ldots ,y_r\rangle$. 

Our first two theorems  are rather general results about probabilistic generation by $r$-tuples in an  irreducible subvariety $X$ of $G^r$. In the first result, the characteristic 
 of the underlying field is fixed. In the second, the characteristic is allowed to vary, hence the requirement that $X$ is defined over a suitable ring in characteristic zero. 

In the statement below, we refer to an explicit positive integer $M$ (depending only on the rank of $G$) which is defined in Remark \ref{defM} in Section 2. Also, 
for a prime power $q$, we write $X(q) = X \cap G(q)^r$, and extend the action of the Frobenius morphism $F_q$ to $G^r$, acting in the same way on each factor.
In order to avoid some technical complications in the proof when $G(q)$ is of Suzuki or Ree type, we make the extra assumption that $X$ is $G$-invariant in this case 
(i.e. $(x_1,\ldots,x_r) \in X, g\in G \Rightarrow (x_1^g,\ldots ,x_r^g) \in X$).
\begin{theorem} \label{newmain1} Let $p$ be a prime,  let $k$ be the algebraic closure of the prime field $\FF_p$,  and let 
$G$ be a simple, simply connected algebraic group over $k$. Suppose $X$ is an irreducible subvariety of $G^r$ ($r\ge 2$), and let $Q$ be the set of powers $q=p^a$ such that $X$ is $F_q$-stable, where $F_q$ is the Frobenius morphism defining $G(q)$ as above. If $G(q)$ is of Suzuki or Ree type, assume further that $X$ is $G$-invariant. 
Then the following are equivalent:
\begin{itemize}
\item[{\rm (i)}] ${\rm lim}_{q\in Q,\,q\to \infty}P_{G(q)}(X(q)) = 1$;
\item[{\rm (ii)}] there exist $q_1,q_2 \in Q$, with $q_2>M$, and $r$-tuples $x_i \in X(q_i)$ such that $\langle x_i \rangle = G(q_i)$ for $i=1,2$.
\end{itemize}
\end{theorem}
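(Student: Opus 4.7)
Direction (i) $\Rightarrow$ (ii) is immediate: if $\lim_{q \in Q} P_{G(q)}(X(q)) = 1$ then $P_{G(q)}(X(q)) > 0$ for all sufficiently large $q \in Q$, so $X(q)$ contains generating $r$-tuples for such $q$; choose two of these with the second beyond $M$. For the substantive direction (ii) $\Rightarrow$ (i), the plan is to apply Lang-Weil to get $|X(q)| = q^{\dim X}(1 + O(q^{-1/2}))$ and to bound the non-generating tuples in $X(q)$ as $o(|X(q)|)$. First I would extract two structural consequences of (ii) when $q_2 > M$ (taking $M$ large enough that $G(q_2)$ is simple and Zariski dense in $G$): (a) since $\langle x_2 \rangle = G(q_2)$ is Zariski dense in $G$, $C_G(x_2) = Z(G)$ is finite, so the $G$-orbit of $x_2$ in $G^r$ has dimension $\dim G$; because this orbit lies in the irreducible variety $X$, we conclude $\dim X \geq \dim G$; (b) $x_2$ does not lie in $g H^r g^{-1}$ for any proper closed subgroup $H < G$ and any $g \in G$.

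A non-generating tuple of $X(q)$ must lie in some maximal subgroup of $G(q)$. By the standard classification, such maximals split into three families: (I) $F_q$-normalizers of proper $F_q$-stable closed subgroups $H < G$, of which there are finitely many conjugacy classes; (II) subfield subgroups $N_{G(q)}(G(q_0)^g)$ with $q = q_0^m$, $m$ prime; (III) finite almost-simple maximals of bounded order. For family (I), consider the orbit map $\mu_H : G \times H^r \to G^r$, $(g, h_1, \ldots, h_r) \mapsto (g h_1 g^{-1}, \ldots, g h_r g^{-1})$. By (b), $x_2 \notin \mu_H(G \times H^r)$ for any proper closed $H < G$, so $X \cap \mu_H(G \times H^r)$ is a proper closed subvariety of $X$ of dimension at most $\dim X - 1$; Lang-Weil then yields an $O(q^{\dim X - 1/2})$ bound on its $\FF_q$-points, and summing over the finitely many conjugacy classes of $H$ contributes $O(q^{-1/2} |X(q)|)$. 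For family (III), each maximal $M$ satisfies $|M| \leq c$ for an absolute constant $c$, so the number of tuples in conjugates of $M^r$ is at most $c^{r-1} |G(q)| = O(q^{\dim G}) = o(|X(q)|)$ once $\dim X > \dim G$.

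The main technical obstacle is family (II). For a subfield $q = q_0^m$, the tuples in $X(q)$ contained in some conjugate of $G(q_0)^r$ number at most $|G(q) : N_G(G(q_0))^{F_q}| \cdot |X(q) \cap G(q_0)^r|$. When $X$ is $F_{q_0}$-stable, Lang-Weil bounds the inner factor by $O(q_0^{\dim X})$, giving a ratio to $|X(q)|$ of order $(q/q_0)^{\dim G - \dim X}$, which tends to $0$ when $\dim X > \dim G$; when $X$ is not $F_{q_0}$-stable, the $F_{q_0}$-fixed points of $X$ lie in the proper subvariety $X \cap F_{q_0}(X)$, giving an even sharper bound. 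The borderline case $\dim X = \dim G$ (when $X$ is essentially a single $G$-conjugacy class) requires a refinement: a tuple in $X(q)$ generating a subfield subgroup further constrains the centralizer structure, so that such tuples live on a proper subvariety of $X$ and contribute only $O(q^{\dim X - 1/2})$ by a second application of Lang-Weil.

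Summing the three contributions shows that the non-generating tuples form a $o(1)$-fraction of $X(q)$, yielding (i). The $G$-invariance hypothesis on $X$ in the Suzuki and Ree cases ensures that the orbit morphisms $\mu_H$ and the twisted Frobenius actions interact as in the untwisted setting, so that the same Lang-Weil-based estimates apply uniformly. The constant $M$ encodes the requirement that $G(q_2)$ be Zariski dense in $G$, together with finitely many exceptional small-$q$ pathologies (non-simplicity, exotic centralizers, small-rank isomorphisms) which must be excluded.
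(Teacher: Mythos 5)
Your direction (i)$\Rightarrow$(ii) is fine, and for the positive-dimensional maximal subgroups and the boundedly many bounded almost simple maximals your counting is in the spirit of what is needed. But the decisive gap is exactly where you wave your hands: subfield subgroups in the borderline case $\dim X=\dim G$. Your claimed ``refinement'' (that tuples of $X(q)$ generating subfield subgroups lie on a proper subvariety of $X$) is false. Take $x_2\in X(q_2)$ with $\langle x_2\rangle=G(q_2)$ and $q_2>M$, and let $X=\overline{x_2^G}$: this is irreducible, $F_{q_2}$-stable, of dimension $\dim G$ (the centralizer of $x_2$ is finite), and for $q=q_2^m$ every tuple in the dense open orbit of $X(q)$ generates a $G$-conjugate of $G(q_2)$, so $P_{G(q)}(X(q))\not\to 1$ even though a generating tuple exists at the single value $q_2>M$. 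This shows two things: first, the borderline case cannot be handled by any argument that, like yours, never uses the second tuple $x_1$ at a \emph{different} field size $q_1\ne q_2$ -- that hypothesis is essential; second, your family (III) estimate, which needs $\dim X>\dim G$, cannot be salvaged from the inequality $\dim X\ge\dim G$ you actually derive. The paper's mechanism for exploiting the two field sizes has no counterpart in your proposal: one chooses a module $V$ whose character values on $G(p^f)$ generate $\FF_{p^f}$, applies Lemma \ref{traces} to $x_1$ and $x_2$ to force some bounded-length word-trace function $\chi\circ w$ to be non-constant on $X$, and then Lemma \ref{trap} bounds by $O(q^{(s-1)/2})$ the tuples of $X(q)$ whose trace field is a proper subfield of $\FF_q$; combined with the open dense set $W$ of tuples acting irreducibly on the modules of Lemma \ref{goodmodules}, this eliminates subfield subgroups with no dimension hypothesis at all.

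There are further problems you would need to repair even in the case $\dim X>\dim G$. Your justification of (a) and (b) via ``$\langle x_2\rangle=G(q_2)$ is Zariski dense in $G$'' is false: $G(q_2)$ is finite, hence closed and never dense; the correct statements (finiteness of $C_G(x_2)$, and non-containment of $x_2$ in conjugates of $H^r$ for \emph{positive-dimensional} proper closed $H$ -- it certainly lies in $H^r$ for $H=G(q_2)$) require precisely the module-irreducibility machinery behind Lemma \ref{goodmodules} and the definition of $M$ in Remark \ref{defM}. Moreover $\mu_H(G\times H^r)$ is only constructible, so knowing $x_2$ avoids the image does not show $X$ is not contained in its closure; the paper sidesteps this because reducibility on a module is a closed condition (Lemma \ref{elem}), so the complement of $W$ is genuinely closed. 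Your family (II) count bounds the tuples in a conjugate $(G(q_0)^g)^r$ by $|X(q_0)|$, which implicitly assumes $X$ is conjugation-invariant; the theorem assumes this only for Suzuki and Ree groups. Finally, for Suzuki and Ree groups Lang--Weil does not apply to the twisted Frobenius at all (here $|X(q)|\sim q^{\dim X/2}$), so your claim that ``the same Lang--Weil-based estimates apply uniformly'' is unfounded; the paper instead uses the Varshavsky-based point count from \cite{Lasht} together with the direct subfield-subgroup count, and it is exactly there that the $G$-invariance of $X$ and the inequality $\dim X>\dim G$ (via Lemma \ref{xleg}) are really used.
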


\noindent \textbf{Remarks} (1) Of course if condition (ii) does not hold, then the limiting probability referred to in (i) is 0. Hence this theorem is a rather strong form of  zero-one law for random generation in a fixed characteristic. 

\noindent (2) As indicated above, in the statement of the theorem the groups $G(q)$ are taken to be either all of untwisted type, or all of a fixed twisted type. Thus for example when $G = E_6$, Theorem \ref{newmain1} comprises two assertions, one for the family $E_6(q)$ and the other for the family $^2\!E_6(q)$. To make it absolutely clear, for the latter family, the assertion is that the statements
\begin{itemize}
\item[{\rm (i)}] ${\rm lim}_{q\in Q,\,q\to \infty}P_{\,^2\!E_6(q)}(X(q)) = 1$;
\item[{\rm (ii)}] there exist $q_1,q_2 \in Q$, with $q_2>M$, and $r$-tuples $x_i \in X(q_i)$ such that $\langle x_i \rangle = \,^2\!E_6(q_i)$ for $i=1,2$
\end{itemize}
are equivalent. This observation applies to all the results stated below.

\noindent (3) The set $Q$ defined in the statement of Theorem \ref{newmain1} can be described as follows. Let $q_0$ be minimal such that  
$X$ is $F_{q_0}$-stable. If the groups $G(q)$ are untwisted (so that the $F_q$ are field endomorphisms), 
then $Q = \{q_0^a : a\in \NN\}$, the set of powers of $q_0$. If $G(q)$ is of Suzuki or Ree type, $Q$ consists of all odd powers of $q_0$. And for the other twisted types, $Q$ consists of either all powers of $q_0$, or all powers $q_0^a$ with $a$ coprime to 2 (for types $^2\!A_n,\,^2\!D_n,\,^2\!E_6$) or coprime to 3 (for type $^3\!D_4$); the first possibility (all powers) occurs 
if $X$ is fixed by a graph automorphism of $G$, the second if not.  

\noindent (4) If the irreducibility condition on $X$ is dropped, then application of the theorem to the irreducible components of highest dimension shows that the probability defined in (i) is a rational number.

\vspace{2mm}
In the next result the characteristic is allowed to vary, and in order to state it  we require some notation. Let $R$ be a ring of the form ${\mathcal O}[\frac{1}{n}]$, where $n$ is a positive integer, and ${\mathcal O}$ is the ring of integers of an algebraic number field. 
Suppose $G$ is a simply connected Chevalley group scheme (defined over $\ZZ$), and let $X$ be an irreducible subscheme of $G^r$ defined over $R$, for some $r\ge 2$. 
For each prime $p$, we have the simply connected simple algebraic group $G(\bar \FF_p)$, and Frobenius endomorphisms $F_q$ ($q=p^a$) with fixed point groups $G(q)$ as above (again assuming either all $G(q)$ are of untwisted type, or all are of a fixed twisted type). As before, extend the action of $F_q$ to $G(\bar \FF_p)^r$, acting in the same way on each factor.
For $p$ coprime to $n$, let $q_i(p)$ ($1\le i\le k_p$) denote the sizes of the residue fields of $R$ in characteristic $p$, and for a finite set $S$ of primes such that $S$ contains all the prime divisors of $n$, define 
\[
Q_R(S) = \bigcup_{p\not \in S}\bigcup_i\{q_i(p)^a : X(\bar \FF_p) \hbox{ is }F_{q_i(p)^a}\hbox{-stable}\}.
\]
For $q\in Q_R(S)$, set $X(q) = G(q) \cap X(\bar \FF_p)$. 

\begin{theorem}\label{newmain2}
Let $G$ and $X$ be as above. Then there is a finite set $S$ of primes (containing the prime divisors of $n$) such that  the following are equivalent:
\begin{itemize}
\item[{\rm (i)}] ${\rm lim}_{q\in Q_R(S), q\to \infty} P_{G(q)}(X(q)) = 1$;
\item[{\rm (ii)}] there exist a prime $p \not \in S$,  two powers $q,q'$ of $p$ lying in $Q_R(S)$, with $q'>M$, and $r$-tuples $x \in X(q)$, $x'\in X(q')$, such that $\langle x \rangle = G(q)$, 
$\langle x' \rangle = G(q')$.
\end{itemize}
\end{theorem}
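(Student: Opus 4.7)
The plan is to reduce Theorem~\ref{newmain2} to Theorem~\ref{newmain1} by a spreading-out argument: I will produce a single finite set $S$ of primes outside of which the relevant ``non-generating'' subvarieties of $X$ are defined uniformly across residue characteristics, reducing the variable-characteristic question to a single application of the fixed-characteristic result plus uniform Lang--Weil estimates. The implication (i)$\Rightarrow$(ii) is immediate, so I concentrate on (ii)$\Rightarrow$(i).

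First I would assemble the geometric data. Since $X$ is irreducible of finite type over $R = \mathcal{O}[1/n]$, standard spreading-out results let me enlarge $S$ so that for every prime $p \notin S$ the fibre $X \otimes \bar\FF_p$ is geometrically integral of dimension $d = \dim_{\mathrm{Frac}(R)} X$. I would then include in $S$ the ``bad primes'' for $G$, ensuring that the finitely many conjugacy classes of proper positive-dimensional closed subgroup schemes $H$ of $G$ arising as maximal subgroups of $G(q)$ -- parabolics, maximal-rank subgroups, and members of the Dynkin--Liebeck--Seitz list of reductive subgroups -- are defined over $R$ with the expected fibrewise structure. For each such $H$, form the closed subscheme
\[
\overline{Y_H} \;:=\; \overline{\, X \cap \bigl(\bigcup_{g \in G} gHg^{-1}\bigr)^{\!r}\,} \;\subseteq\; X,
\]
and, after inverting finitely more primes absorbed into $S$, arrange each $\overline{Y_H}$ to be flat over $\Spec(R)$, so that its fibre dimension is constant across $\Spec(R)$.

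Next I would apply Theorem~\ref{newmain1} at the prime $p$ supplied by (ii). Its proof shows that for each $H$ above, $\overline{Y_H} \otimes \bar\FF_p$ is a proper closed subvariety of $X \otimes \bar\FF_p$, so $\dim(\overline{Y_H} \otimes \bar\FF_p) < d$. By the flatness of $\overline{Y_H}$ over $R$ and the geometric integrality of the fibres of $X$, this single-characteristic inequality propagates to every fibre: $\dim(\overline{Y_H} \otimes \bar\FF_{p'}) < d$ for every $p' \notin S$. A uniform Lang--Weil estimate, whose error constants depend only on the degrees and dimensions of the defining equations of $X$ and the $\overline{Y_H}$ and are therefore bounded uniformly in $p'$, then yields
\[
|X(q)| \;=\; q^{d}\bigl(1 + O(q^{-1/2})\bigr), \qquad |\overline{Y_H}(q)| \;=\; O(q^{d-1}),
\]
for every $q \in Q_R(S)$. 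Summing over the finite list of $H$'s and absorbing the $O(q^{-d})$ contribution from the bounded-order finite subgroup obstructions, I conclude that $1 - P_{G(q)}(X(q)) = O(q^{-1/2})$ uniformly in $q \in Q_R(S)$, giving (i).

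The hardest part will be keeping the geometric constructions compatible with reduction at every $p \notin S$ simultaneously -- in particular, ensuring that the constant $M$ of Theorem~\ref{newmain1} and the defining equations of each $\overline{Y_H}$ may be chosen independently of $p$. This is technically a matter of constructibility and flatness over a sufficiently small dense open subscheme of $\Spec(R)$, but the bookkeeping is somewhat intricate for twisted types, where Frobenius acts non-trivially on the root datum and the set $Q_R(S)$ is constrained by the residue field structure of $R$.
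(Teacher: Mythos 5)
The central gap is that your argument never excludes subfield subgroups. Avoiding all conjugates of the positive-dimensional maximal subgroups $H$ (parabolics, maximal-rank, Liebeck--Seitz reductive subgroups) does not force an $r$-tuple to generate $G(q)$: it can still generate a conjugate of a subfield subgroup $G(q^{1/b})$, and these are maximal subgroups of \emph{unbounded} order, not contained in any proper positive-dimensional closed subgroup, so they cannot be absorbed into your ``bounded-order finite subgroup obstructions'' term. A naive count of tuples lying in conjugates of $G(q^{1/b})$ gives roughly $q^{(\dim G+\dim X)/b \cdot \text{(appropriate exponents)}}$-type bounds that beat $|X(q)|\approx q^{\dim X}$ only when $\dim X>\dim G$ (this is exactly the situation of Remark (1) after Theorem \ref{newmain2} and the Suzuki--Ree case of Theorem \ref{newmain1}, where Lemma \ref{xleg} is invoked first); in general $\dim X\le\dim G$ is possible and the count fails. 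This is precisely why the paper's proof does not argue via maximal subgroup varieties at all: it uses the trace machinery (Lemma \ref{traces}, Lemma \ref{trap}, Corollary \ref{tra}) to show that outside a set of size $O(q^{(\dim X-1)/2})$ the traces of a suitable word generate all of $\FF_q$, which rules out subfield subgroups, and this is also the point where both generating tuples in hypothesis (ii) are used (two different fields are needed to force the trace word to be non-constant on $X$); the finite exceptional set $S$ of primes is then defined as the primes where that trace word becomes constant, not by flatness of subgroup loci.

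A secondary problem is your claim that the proof of Theorem \ref{newmain1} shows $\overline{Y_H}\otimes\bar\FF_p\subsetneq X\otimes\bar\FF_p$. The set of tuples lying in some conjugate of $H$ is only constructible, and a dense constructible subset of an irreducible variety can perfectly well miss the particular $\FF_q$-point furnished by (ii), so the existence of one generating tuple does not make its closure proper. The paper gets properness of the ``bad'' locus from a different mechanism: reducibility on a member of the finite module collection $\mathcal S$ is a \emph{closed} condition (Lemma \ref{elem}), every proper closed subgroup other than the $G(q_0)$ with $q_0\notin\mathcal N_G$ acts reducibly on some module in $\mathcal S$ (Lemma \ref{goodmodules}), and the generating tuple over $q'>M$ lies in the open complement $W$, so $X\setminus W$ is proper closed (Lemma \ref{n large}); moreover $W$ is defined over $\ZZ[\frac1s]$ uniformly in the characteristic, which is what replaces your flatness bookkeeping. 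If you want to salvage your route you would need both this module-theoretic openness device and, unavoidably, a separate treatment of subfield subgroups along the lines of the paper's trace-field argument.
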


\noindent \textbf{Remarks }(1) If in Theorems \ref{newmain1} and \ref{newmain2} we also assume that $\dim X > \dim G$, then in both results, condition (ii) can be replaced by the existence of just one $r$-tuple $x' \in X(q')$  (rather than two) such that $\langle x' \rangle = G(q')$ and $q'>M$. This remark is justified at the end of the proof of Theorem \ref{newmain2} in Section \ref{th2pf}.

(2) In our main applications of Theorem \ref{newmain2}, namely Corollaries \ref{prob} and \ref{mainvary} below, $X$ is defined over a ring $R$ whose field of fractions is a cyclotomic extension of $\QQ$, and so all the residue fields of $R$ in a given characteristic have the same size (i.e. $k_p=1$ for all $p$ in the above notation).

\vspace{2mm}
There are many interesting subvarieties of $G^r$ to which we can attempt to apply Theorems \ref{newmain1} and \ref{newmain2} -- for example $G^r$ itself, or the Cartesian product $C_1\times \cdots \times C_r$ of conjugacy classes $C_i$, or a representation variety ${\rm Hom}(\Gamma, G)$ of a finitely presented group $\Gamma$ on $r$ generators. We shall apply the above results to several such subvarieties.

First, we take $r=2$ and $X = G^2$, which is of course irreducible. Applying Theorems \ref{newmain1} and \ref{newmain2}, we can quickly deduce the following consequence, giving random generation of simple groups of fixed Lie type by pairs of elements. This was first proved for classical types in \cite{KL} and for exceptional types in \cite{lsprob}, using completely different methods. 

\begin{corollary}\label{prob} Let $G$ be a fixed simply connected, simple Lie type. Then 
\[
{\rm lim}_{q\to \infty}P_{G(q)}(G(q),G(q)) = 1.
\]
\end{corollary}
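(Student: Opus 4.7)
The plan is to apply Theorems \ref{newmain1} and \ref{newmain2} with $r=2$ and $X=G^2$ itself. As a product of the irreducible variety $G$ with itself, $X=G^2$ is irreducible; as a Chevalley group scheme over $\ZZ$, the product $G^2$ has geometrically irreducible fibres in every characteristic, so it also furnishes an irreducible subscheme of $G^r$ defined over $R=\ZZ$. Trivially, $X=G^2$ is $G$-invariant under diagonal conjugation, which handles the extra hypothesis in the Suzuki/Ree cases of Theorem \ref{newmain1}. Since $G^2$ is defined over the prime field, it is $F_q$-stable for every $q$, so the index set $Q$ of Theorem \ref{newmain1} consists of all powers of $p$, and the set $Q_R(S)$ of Theorem \ref{newmain2} consists of all prime powers whose underlying prime lies outside $S$.

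The only substantive hypothesis to verify is condition (ii) in each theorem: the existence, for sufficiently large $q$, of a generating pair of $G(q)$ inside $G(q)^2 = X(q)$. This is a classical fact. By a theorem of Steinberg, every finite simple group of Lie type is 2-generated; and the simply connected group $G(q)$ is a perfect central extension of its simple quotient by a cyclic centre (for $q$ large enough), so $G(q)$ itself is 2-generated. Consequently, for every prime $p$ (in particular every $p\notin S$), we can choose two powers $q_1<q_2$ of $p$ with $q_2>M$ such that generating pairs for $G(q_i)$ exist in $G(q_i)^2$ for $i=1,2$, verifying (ii).

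Invoking Theorem \ref{newmain2} with $R=\ZZ$ and the associated finite set $S$ of excluded primes then gives
\[
\lim_{q\in Q_R(S),\, q\to\infty} P_{G(q)}(G(q)^2) = 1,
\]
while for each of the finitely many primes $p\in S$, Theorem \ref{newmain1} applied in characteristic $p$ yields $\lim_{q=p^a,\,q\to\infty}P_{G(q)}(G(q)^2)=1$. Combining these limits over all characteristics produces the statement of the corollary. There is essentially no obstacle beyond having the two zero-one theorems in hand, the only classical input being the 2-generation of $G(q)$ for large $q$.
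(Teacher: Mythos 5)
Your proposal is correct and follows essentially the same route as the paper: take $r=2$, $X=G^2$ (irreducible, defined over $\ZZ$, trivially $G$-invariant so the Suzuki--Ree hypothesis holds), verify condition (ii) via Steinberg's 2-generation of $G(q)$, and combine Theorem \ref{newmain2} for all but finitely many characteristics with Theorem \ref{newmain1} for the excluded ones. The only quibble is your aside that the centre of the simply connected group is cyclic (false for type $D_{2n}$ with $q$ odd), but this is harmless since perfectness alone gives the lifting of generating pairs, and in any case \cite{St} applies directly to $G(q)$.
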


More generally, for $X = C\times G$, where $C$ is an irreducible  subvariety of $G$, we prove the following; we state it in fixed characteristic, but as for Theorem \ref{newmain2} there is a version for varying characteristics, provided we asume that $C$ is a subscheme defined over a suitable ring. We write $C(q) = C\cap G(q)$.

\begin{corollary}\label{cgcor} Let $G$ be a simple, simply connected algebraic group over $\bar \FF_p$, let $C$ be an irreducible subvariety of $G$, and let $Q$ be the set of powers $q$ of $p$ such that $C$ is $F_q$-stable.  If $G(q)$ is of Suzuki or Ree type, assume further that $C$ is $G$-invariant. Then 
${\rm lim}_{q\in Q,\,q \to \infty}P_{G(q)}(C(q),G(q)) = 1$.
\end{corollary}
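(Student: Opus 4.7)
The plan is to apply Theorem~\ref{newmain1} with $r = 2$ to the subvariety $X := C \times G \subseteq G^2$. The preliminary hypotheses are immediate: $X$ is irreducible as a product of two irreducible varieties ($G$ being connected); $X$ is $F_q$-stable exactly when $C$ is (since $G$ is always $F_q$-stable), so the set $Q$ appearing in Theorem~\ref{newmain1} coincides with the one in the corollary; and in the Suzuki/Ree case the $G$-invariance hypothesis transfers from $C$ to $X = C \times G$ under the diagonal action. It therefore suffices to verify condition~(ii) of Theorem~\ref{newmain1}: to exhibit, for two values of $q \in Q$ with the larger exceeding~$M$, a pair $(c,g) \in C(q) \times G(q)$ that generates $G(q)$.

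To produce such a pair, apply Lang--Weil to the irreducible $F_q$-stable variety $C$: this yields $|C(q)| = q^{\dim C}(1 + O(q^{-1/2}))$, so $C(q)$ is non-empty for every sufficiently large $q \in Q$. The degenerate case $C \subseteq Z(G)$ forces $C$ to be a single central point, whence $P_{G(q)}(C(q),G(q)) = 0$ for all $q$; this case is implicitly excluded by the corollary's stated conclusion. Otherwise $C \smallsetminus Z(G)$ is a non-empty open irreducible subvariety of the same dimension, and a further application of Lang--Weil supplies a non-central element $c \in C(q)$ for every sufficiently large $q \in Q$.

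For such a non-central $c$, its image $\bar c$ in the simple quotient $T := G(q)/Z(G(q))$ is non-trivial, and the $\tfrac{3}{2}$-generation theorem for finite simple groups (Breuer--Guralnick--Kantor) supplies $\bar g \in T$ with $\langle \bar c, \bar g\rangle = T$. Lifting $\bar g$ to any $g \in G(q)$, the subgroup $H := \langle c, g \rangle$ satisfies $H \cdot Z(G(q)) = G(q)$ and, as $Z(G(q))$ is central, is normal in $G(q)$; since $G(q)/H$ is then a quotient of the abelian group $Z(G(q))/(Z(G(q)) \cap H)$ while $G(q)$ is perfect, we must have $H = G(q)$. This verifies condition~(ii) for every sufficiently large $q \in Q$, and Theorem~\ref{newmain1} delivers the desired limiting probability~$1$. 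The main obstacle in the plan is the $\tfrac{3}{2}$-generation ingredient, which depends on the classification of finite simple groups; once that is granted, the lift to the simply connected cover $G(q)$ is the standard perfectness argument sketched above.
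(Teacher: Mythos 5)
Your proposal is correct and follows essentially the same route as the paper: apply Theorem~\ref{newmain1} to $X = C\times G$, note $C(q)\neq\emptyset$ for large $q\in Q$, and use the Guralnick--Kantor $\tfrac{3}{2}$-generation result \cite{GK} to produce a pair $(c,d)\in C(q)\times G(q)$ generating $G(q)$. The only differences are matters of detail the paper leaves implicit (excluding the degenerate central $C$, and lifting generation from the simple quotient to the quasisimple group $G(q)$ via perfectness), which you handle correctly.
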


In the next  two corollaries, we take $r=2$ and $X = C\times D$, the Cartesian product of two conjugacy classes. In the first,                                                                                                                                                                                                                                                                                                                                                                                                                                                                                                                                                                                                                                                                                                                                                                                                                                                                                                                                                                                                                                                                                                                                                                                                                                                                                                                                                                                                                                                                                                                                                                                                                                                                                                                                                                                                                                                                                                                                                                                                                                                                                                                                                                                                                                                                                                                                                                                                                                                                                                                                                                                                                                                                                                                                                                                                                                                                                                                                                                                                                                                                                                                                                                                                                                                                                                                                                                                                                                                                                                                                                                                                                                                                                                                        the characteristic is fixed, and the classes can contain elements of arbitrary (finite) order; while in the second,the characteristic of the underlying field is allowed to vary, but the classes must consist of semisimple elements. Write $C(q) = C\cap G(q)$, $D(q) = D\cap G(q)$.

\begin{corollary} \label{mainp} Let $p$ be a prime and $k = \bar \FF_p$,  and let  
$G$ a simple, simply connected algebraic group over $k$.
Let $C$ and $D$ be  conjugacy classes in $G$ consisting of elements
of finite order, and denote by $Q$ the set of powers $q=p^a$ such that $C(q) \ne \emptyset$ and $D(q) \ne \emptyset$. Then the following are equivalent:
\begin{itemize}
\item[{\rm (i)}] ${\rm lim}_{q\in Q, q\to \infty}P_{G(q)}(C(q),D(q)) = 1$;
\item[{\rm (ii)}] there exists $q > M$ and $(c,d) \in C(q)\times D(q)$, such that $\langle c,d\rangle = G(q)$.
\end{itemize}
\end{corollary}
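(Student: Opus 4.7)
The plan is to apply Theorem \ref{newmain1} to the subvariety $X := C \times D$ of $G^2$, translating its hypotheses and conclusion into those of the corollary. Since $C$ and $D$ are orbits of the connected algebraic group $G$ under conjugation, each is irreducible, and hence $X$ is an irreducible subvariety of $G^2$; it is also $G$-invariant under diagonal conjugation, which disposes of the extra hypothesis needed in Theorem \ref{newmain1} for the Suzuki and Ree cases.

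Next I would check that the set $Q$ defined in the corollary coincides with the set of $q$ for which $X$ is $F_q$-stable, and that under this identification $X(q) = C(q) \times D(q)$ and $P_{G(q)}(X(q)) = P_{G(q)}(C(q), D(q))$. One direction is immediate: if $c \in C(q)$, then $F_q(C) = F_q(c)^G = c^G = C$, and likewise for $D$. For the converse, since $C \simeq G/C_G(c)$ is a connected homogeneous $G$-variety with a compatible $F_q$-action, the Lang--Steinberg theorem furnishes an $F_q$-fixed point in $C$, giving $C(q) \ne \emptyset$; the analogous argument applies to $D$. This Lang--Steinberg step is the only input of substance beyond Theorem \ref{newmain1} itself and constitutes the main technical point of the reduction.

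Having matched the two setups, Theorem \ref{newmain1} asserts that condition (i) of the corollary is equivalent to the existence of $q_1, q_2 \in Q$ with $q_2 > M$ and tuples $x_i \in X(q_i)$ generating $G(q_i)$ for $i = 1, 2$. The corollary's hypothesis (ii) supplies such data with $q_2 = q$ and $x_2 = (c,d)$; to furnish $x_1$ I would set $q_1 = q$ and take $x_1 \in X(q)$ to be any $G(q)$-conjugate $(c^g, d^g)$ of $(c,d)$, which lies in $X(q)$ by $G$-invariance of $X$ and still generates $G(q) = G(q)^g$. Conversely, Theorem \ref{newmain1}(ii) immediately yields the corollary's (ii) upon setting $q := q_2$ and $(c,d) := x_2$, so the corollary reduces entirely to Theorem \ref{newmain1} together with the Lang--Steinberg identification of $Q$.
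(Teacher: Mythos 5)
There is a genuine gap at the final step, and it is exactly the point the corollary is designed to address. Theorem \ref{newmain1}(ii) requires generating $r$-tuples over \emph{two distinct field sizes} $q_1\neq q_2$ (with $q_2>M$): its proof produces a non-constant trace function on $X$ by playing the trace field of $x_2$ (which generates $\FF_{q_2}$) against the trace field of $x_1$ (contained in $\FF_{q_1}$), and this argument needs $\FF_{q_2}\not\subseteq\FF_{q_1}$. Your choice $q_1=q_2=q$ with $x_1=(c^g,d^g)$ a $G(q)$-conjugate of $(c,d)$ supplies no new trace information whatsoever, so the theorem's proof does not go through with that input; and the loose reading of (ii) that would allow $q_1=q_2$ is simply false as a general statement about irreducible $G$-invariant subvarieties. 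For instance, take $X$ to be the closure of the $G$-orbit (under diagonal conjugation) of a single generating pair of $G(q)$: condition (ii) holds with $q_1=q_2=q$, yet for $q'=q^m$ the pairs in $X(q')$ generate conjugates of $G(q)$ (up to a lower-dimensional boundary), so $P_{G(q')}(X(q'))\to 0$. So the reduction as you have written it is circular where it matters: the corollary's whole content is that \emph{one} value of $q$ suffices for $X=C\times D$, and this cannot be obtained by formally duplicating the same datum.

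The missing ingredient is the dimension bound. The paper argues: if $\langle c,d\rangle=G(q)$ with $q>M$, then $(C\times D)\cap W_2(k)\neq\emptyset$, so Lemma \ref{scottlemma} (Scott's lemma applied to the adjoint module) gives $\dim C+\dim D>\dim G$, i.e.\ $\dim X>\dim G$; then Remark (1) after Theorem \ref{newmain2}, justified via Corollary \ref{orderN}, shows that when $\dim X>\dim G$ a single generating tuple at a single $q'>M$ already forces some $\tr\circ w$ to be non-constant on $X$, after which the proof of Theorem \ref{newmain1} runs as before. Your observations that $X=C\times D$ is irreducible and $G$-invariant, and the Lang--Steinberg identification of $Q$ with the set of $q$ for which $X$ is $F_q$-stable, are correct (the latter is in fact slightly more careful than the paper, which only notes the easy inclusion), but they do not substitute for the Scott's-lemma step that allows hypothesis (ii) of the corollary to involve only one field size.
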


Notice that in (ii), we only require the condition for one value of $q$ (rather then the two required in (ii) of Theorem \ref{newmain1}). This is justified by Remark (1) after Theorem \ref{newmain2}, together with Lemma \ref{scottlemma} which shows that condition (ii) implies that $\dim C+\dim D > \dim G$.

To state the second corollary, we need some notation for semisimple classes. 
For a positive integer $r$, the conjugacy classes of semisimple elements of order $r$ in $G(k)$ are parametrized by the orbits of the Weyl group $W(G)$ on the set of elements of order $r$ in a maximal torus. This parametrization is independent of the characteristic $p$ (coprime to $r$). For such an orbit $\Delta_r$ of the Weyl group, write $C(\Delta_r,k)$ for the corresponding semisimple class of elements of order $r$ in $G(k)$.

\begin{corollary}\label{mainvary}
Let $G$ be a fixed type of simple, simply connected algebraic group. Let $r,s$ be positive integers, and $\Delta_r,\Delta_s$ orbits of the Weyl group, as above. For any algebraically closed field $k$ of characteristic coprime to $rs$, let $C = 
C(\Delta_r,k)$ and $D = C(\Delta_s,k)$ be the corresponding classes of semisimple elements of orders $r,s$ in $G(k)$. Denote by $Q$ the set of prime powers $q$ (in varying such characteristics) such that $C(q) \ne \emptyset$ and $D(q) \ne \emptyset$. Then  the following are equivalent:
\begin{itemize}
\item[{\rm (i)}] ${\rm lim}_{q\in Q, q\to \infty}P_{G(q)}(C(q),D(q)) = 1$;
\item[{\rm (ii)}] for any prime $p \nmid rs$, there is a power $q=p^a>M$ and a pair $(c,d) \in C(q)\times D(q)$, such that $\langle c,d\rangle = G(q)$.
\end{itemize}
\end{corollary}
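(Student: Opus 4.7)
The strategy is to deduce Corollary \ref{mainvary} as an application of Theorem \ref{newmain2} to the irreducible scheme $X = C \times D$, choosing the base ring so that $Q$ coincides with $Q_R(S)$. Let $(\ZZ/rs\ZZ)^*$ act on the $rs$-torsion of a maximal torus of $G$ by multiplication, and let $H \le (\ZZ/rs\ZZ)^*$ be the common stabilizer of $\Delta_r$ and $\Delta_s$. Let $K$ be the fixed field of $H$ in $\QQ(\zeta_{rs})$ under the standard isomorphism $(\ZZ/rs\ZZ)^* \cong \operatorname{Gal}(\QQ(\zeta_{rs})/\QQ)$, and set $R = \mathcal{O}_K[\tfrac{1}{rs}]$. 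Since the conjugacy class of a semisimple element depends only on its Weyl-group orbit, $C$ and $D$ descend to subschemes of $G$ over $R$; Steinberg's connectedness theorem makes $C \simeq G/C_G(c)$ and $D \simeq G/C_G(d)$ irreducible, and hence so is $X$.

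Enlarge the set $S$ produced by Theorem \ref{newmain2} to contain all prime divisors of $rs \cdot \operatorname{disc}(K)$. The key arithmetic observation is that for every $p \notin S$ one has $Q \cap \{p^a : a \ge 1\} = Q_R(S) \cap \{p^a : a \ge 1\}$. Indeed $K/\QQ$ is abelian with Galois group $(\ZZ/rs\ZZ)^*/H$, so the residue field at every prime of $R$ above an unramified $p \notin S$ has size $p^{f_p}$, where $f_p$ is the order of $pH$ in this quotient, i.e.\ the smallest $a \ge 1$ with $p^a \in H$. On the other hand, Lang's theorem (using connectedness of centralizers) makes $C(p^a)$ and $D(p^a)$ non-empty exactly when $C, D$ are $F_{p^a}$-stable, i.e.\ when $p^a \in H$. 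Both sides therefore equal $\{p^{f_p b} : b \ge 1\}$.

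The direction (i) $\Rightarrow$ (ii) is then immediate: for any $p \nmid rs$ the set $Q \cap \{p^a\}$ is infinite, so a probability tending to $1$ along $Q$ forces some $q = p^a > M$ to admit a generating pair. For the substantive direction (ii) $\Rightarrow$ (i), pick any $p \notin S$; hypothesis (ii) produces $q = p^a > M$ in $Q = Q_R(S)$ with a generating pair $(c,d) \in C(q) \times D(q)$. Scott's lemma (Lemma \ref{scottlemma}), applied to the adjoint representation, gives $\dim C + \dim D > \dim G$, so Remark (1) after Theorem \ref{newmain2} applies and this single $q$ verifies its condition (ii). Hence $\lim_{q \in Q_R(S), q\to\infty} P_{G(q)}(X(q)) = 1$. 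The finitely many remaining primes $p \in S$ with $p \nmid rs$ are handled by applying Theorem \ref{newmain1} in each such characteristic, which likewise yields the limit $1$ along the powers of $p$ in $Q$; combining these finitely many fibrewise limits with the uniform limit over $Q_R(S)$ delivers condition (i).

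The main obstacle is the careful arithmetic bookkeeping for $Q = Q_R(S)$, particularly when $\gcd(r,s) > 1$ (so that the action of $(\ZZ/rs\ZZ)^*$ on $\Delta_r$ and $\Delta_s$ factors through the two natural projections), and ensuring that the ramified primes are harmlessly absorbed into $S$.
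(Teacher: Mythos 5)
Your proposal is correct and follows essentially the same route as the paper: take $X = C\times D$ regarded over a ring of integers of a (sub)cyclotomic field with $rs$ inverted, use Lemma \ref{scottlemma} to get $\dim C + \dim D > \dim G$ so that Theorem \ref{newmain2} together with Remark (1) applies with a single generating pair, and handle the finitely many excluded characteristics via Theorem \ref{newmain1}. Your explicit choice of $K$ as the fixed field of the common stabilizer $H\le (\ZZ/rs\ZZ)^*$, so that $Q$ and $Q_R(S)$ agree prime by prime, is simply a more careful rendering of the arithmetic bookkeeping the paper leaves implicit.
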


In fact, condition (ii) is only required to hold in finitely many characteristics in order to imply (i).




We now turn to some applications. For positive integers $r,s$ 
let  $V_{r,s}$ be the set of values $P_{r,s}(T)$
as $T$ ranges over all finite simple groups. Let $L_{r,s}$ denote
the set of limit points of $V_{r,s}$.  The following result is an easy consequence
of Corollaries \ref{mainp} and \ref{mainvary}, together with \cite{lishrs}.

\begin{corollary}  \label{rscor} Fix primes $r,s$.
Then  $L_{r,s}$  is a finite set of rational numbers.
\end{corollary}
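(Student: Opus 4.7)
The plan is to partition the non-abelian finite simple groups into families and analyze the contribution of each to $L_{r,s}$. The case $(r,s)=(2,2)$ is trivial, since any group generated by two involutions is dihedral and hence not non-abelian simple, so $L_{2,2}\subseteq\{0\}$; assume henceforth that $(r,s)\ne(2,2)$. Sporadic groups form a finite set and contribute no accumulation points. For alternating groups and for classical groups of unbounded Lie rank, the main result of \cite{lishrs} gives $P_{r,s}(T)\to 1$, so these two families together contribute at most the single rational limit point $1$. It therefore remains to analyze, for each of the finitely many \emph{fixed} Lie types $G$ (the bounded-rank classicals together with the exceptional types), the set of accumulation points of $\{P_{r,s}(G(q))\}_q$.

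Fix such a type $G$ and split the analysis by characteristic. For any characteristic $p$ coprime to $rs$, every element of order $r$ or $s$ in $G(q)$ is semisimple; by the discussion before Corollary \ref{mainvary}, the conjugacy classes of such elements in $G(\bar{\FF}_p)$ are parameterized---independently of $p$---by a finite set of Weyl-group orbits $\Delta_r,\Delta_s$. Writing $C(q),D(q)$ for the classes of $G(q)$ corresponding to a pair $(\Delta_r,\Delta_s)$, and $E_r(q),E_s(q)$ for the sets of all elements of orders $r,s$ in $G(q)$, decompose
\[
P_{r,s}(G(q)) \;=\; \sum_{(\Delta_r,\Delta_s)} \frac{|C(q)|\,|D(q)|}{|E_r(q)|\,|E_s(q)|}\,P_{G(q)}(C(q),D(q)).
\]
By Corollary \ref{mainvary} each factor $P_{G(q)}(C(q),D(q))$ tends to $0$ or to $1$ as $q\to\infty$ through the prime powers (over all characteristics coprime to $rs$) for which both classes are non-empty. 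The weights are rational functions of $q$ with coefficients determined by the Weyl-orbit and centralizer data (via the standard polynomial formulas for orders of reductive centralizers of semisimple elements), hence independent of $p$; they depend on $q$ only through finitely many congruence conditions that determine which orbits are $F_q$-stable and how. On each such congruence class the weights have rational limits, so altogether $G$ contributes only finitely many, all rational, accumulation points from characteristics coprime to $rs$.

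For the at most two remaining characteristics $p\in\{r,s\}$, some of the $r$- or $s$-elements become unipotent, but the number of unipotent classes of a given order in $G(\bar{\FF}_p)$ is still finite, so the same decomposition applies; Corollary \ref{mainp} (in place of Corollary \ref{mainvary}) delivers zero-one behaviour of each conditional probability as $q=p^a\to\infty$, and the weighted sum again produces finitely many rational accumulation points per such $p$. Summing over the finitely many fixed Lie types $G$ and adjoining the limit $1$ from \cite{lishrs} shows that $L_{r,s}$ is a finite set of rationals. The main technical point I expect to verify carefully is the characteristic-uniformity of the class-size weights for $p\nmid rs$: that the limits of $|C(q)||D(q)|/|E_r(q)||E_s(q)|$ really coincide across characteristics on each congruence class of $q$. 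This should follow from the fact that the Weyl-orbit parameterization of semisimple classes and the polynomial formulas for centralizer orders are intrinsically characteristic-independent, but a careful bookkeeping of $F_q$-stable orbits in terms of $q\bmod$ (some integer depending on $r,s$ and $|W(G)|$) is needed.
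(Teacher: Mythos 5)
Your proposal is correct and follows essentially the same route as the paper: reduce to groups of bounded Lie type via \cite{lishrs}, then for each fixed type decompose the set of order-$r$ and order-$s$ elements into finitely many algebraic-group classes and apply Corollaries \ref{mainp} and \ref{mainvary} to each pair, so that each conditional probability has limit points in $\{0,1\}$ while the class-size weights have finitely many rational limits. The only cosmetic difference is bookkeeping: the paper works with the maximal-dimensional irreducible components of $R\times S$ rather than writing out the full weighted sum over class pairs as you do, but the underlying argument (including the characteristic-independence of the semisimple class data you flag) is the same.
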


We conjecture that for $\{ r,s \} \not\subseteq \{ 2,3 \}$, we have  
$L_{r,s}=\{ 0, 1\}$.  More specifically, we conjecture that if $G_i$ is a sequence
of finite simple groups 
such that $|G_i| \rightarrow \infty$ with $r$ and $s$ both dividing $|G_i|$,
then $P_{r,s}(G_i) \rightarrow 1$.  When the ranks of all the groups $G_i$ are greater then some function $f(r,s)$,
this has been proved in \cite{lishrs}; and it has been proved for all ranks when the $G_i$ are of type $A_r$ or $^2\!A_r$, 
in \cite[Cor. 1.3]{Ger}.

In the next result we apply Corollaries \ref{mainp} and \ref{mainvary}  to the study of probabilistic $(2,3)$-generation, completing
the work in \cite{lish23}.  The proof makes essential use of the result of L\"ubeck and Malle
\cite{LM} that, apart from Suzuki groups, all exceptional
groups of Lie type are $(2,3)$-generated. 

\begin{theorem} \label{23excep}  Let $G_i$ be a sequence of 
finite simple exceptional groups of  Lie type with $|G_i| \rightarrow \infty$.
Assume that none of the $G_i$ are Suzuki groups. 
Then $P_{2,3}(G_i) \rightarrow 1$ as $i \rightarrow \infty$. 
\end{theorem}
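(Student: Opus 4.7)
By passing to a subsequence, we may assume all $G_i$ are of a fixed exceptional Lie type $G$, with $G_i=G(q_i)$ and $q_i\to\infty$, and that either the underlying characteristic is fixed or it takes infinitely many values; in the latter ``varying characteristic'' case we may discard the finitely many $G_i$ with $p\in\{2,3\}$, so that elements of orders $2$ and $3$ are semisimple throughout. Let $\{C_i\}$ be the $G$-conjugacy classes of involutions in $G(\bar{\FF}_p)$, $\{D_j\}$ the classes of order-$3$ elements, and write $I_r(q)$ for the set of order-$r$ elements of $G(q)$. Decompose
\[
P_{2,3}(G(q))=\sum_{i,j}\frac{|C_i(q)|\,|D_j(q)|}{|I_2(q)|\,|I_3(q)|}\,P_{G(q)}(C_i(q),D_j(q)).
\]
Corollary \ref{mainp} (fixed characteristic) or Corollary \ref{mainvary} (varying characteristic, applicable since the classes are semisimple) applies to each summand and yields a zero-one dichotomy: each probability $P_{G(q)}(C_i(q),D_j(q))$ either tends to $1$ or equals $0$ for all sufficiently large $q$.

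Standard polynomial counts give $|C_i(q)|=q^{\dim C_i}(1+O(q^{-1}))$, hence $|I_r(q)|\asymp q^{d_r}$ where $d_2=\max_k\dim C_k$ and $d_3=\max_\ell \dim D_\ell$. The weight $|C_i(q)||D_j(q)|/|I_2(q)||I_3(q)|$ therefore tends to $0$ unless $\dim C_i=d_2$ and $\dim D_j=d_3$; call such a pair a \emph{top pair}. Combining with the zero-one dichotomy, it suffices to verify that every top pair is \emph{generating}, meaning that for some $q>M$ there exists $(c,d)\in C_i(q)\times D_j(q)$ with $\langle c,d\rangle=G(q)$: then every top pair contributes probability $\to 1$ to the sum, while the weights of non-top pairs vanish, giving $P_{2,3}(G(q))\to 1$.

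The existence of generating pairs in top class combinations is obtained from the theorem of L\"ubeck and Malle \cite{LM}, which asserts that every non-Suzuki exceptional $G(q)$ (for large $q$) is $(2,3)$-generated. For each exceptional type $G$ and each relevant characteristic we enumerate the top classes of involutions and of order-$3$ elements. In most types and characteristics there is a unique top class in each of $\{C_i\}$ and $\{D_j\}$, so the Lübeck-Malle generating pair (after inspection of their construction, which places the elements in specific conjugacy classes) lies in the unique top pair. For the remaining cases with several top classes --- which arise especially in small characteristics $p\in\{2,3\}$ where unipotent classes of involutions, respectively of order-$3$ elements, proliferate --- one exhibits generating pairs in each top combination. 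Here one may appeal to the classification of maximal subgroups of exceptional groups of Lie type (Liebeck--Seitz) together with Lemma \ref{scottlemma} (Scott's inequality), using dimension counts to rule out that a generic pair from a given top combination is contained in a common proper subgroup. This systematic case-by-case verification across exceptional types and small characteristics is the main technical obstacle.
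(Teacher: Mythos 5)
Your overall strategy is the same as the paper's: decompose $P_{2,3}$ over pairs of algebraic-group classes, observe that only the pairs of classes of maximal dimension (``top pairs'') carry asymptotically non-negligible weight, apply the zero-one Corollaries \ref{mainp} and \ref{mainvary} to each top pair, and import the existence of a single generating pair from the $(2,3)$-generation results of L\"ubeck--Malle and Malle. The paper's Lemma \ref{largest} is exactly your ``top class'' bookkeeping: it shows the largest class of each order is unique at the algebraic-group level for all exceptional types, with the only splittings/coincidences occurring for $G_2$, $p=3$ (two $G(q)$-classes inside $G_2(a_1)$) and, at the finite level, $^3\!D_4(q)$, $p\ne 3$ (two order-3 classes of equal dimension).

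The genuine gap is in your last paragraph, which is precisely where the paper's real content lies. First, your claim that the L\"ubeck--Malle generating pairs lie in the top classes ``after inspection of their construction'' is not something you verify, and it actually fails as stated: for $E_8$ with $p=3$, the generation in \cite{LM} is proved using a class of order-$3$ elements that is \emph{not} the largest one, and the authors of the paper had to recheck the computations of \cite{LM} with the largest class to close this case. Second, for the multi-top-class cases ($G_2(q)$, $p=3$, and $^3\!D_4(q)$, $p\ne 3$) you propose to produce generating pairs in each top combination via the Liebeck--Seitz maximal subgroup classification, Lemma \ref{scottlemma}, and ``dimension counts for a generic pair''. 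This is not a proof: Lemma \ref{scottlemma} only gives a \emph{necessary} condition ($\dim C+\dim D>\dim G$) for a pair to generate, and excluding containment in every maximal (parabolic, reductive, subfield, almost simple) subgroup of these small-rank groups by dimension counts alone is unjustified --- if such a generic argument worked one would not need \cite{LM} at all, whereas the paper stresses that the logical direction here is from explicit $(2,3)$-generation to random $(2,3)$-generation. The paper instead settles these cases by citation and computation: for $G_2(q)$, $p=3$, both largest order-$3$ classes are shown to yield generating pairs in \cite{malle1}; for $^3\!D_4(q)$, $p\ne 3$, one class is handled in \cite{malle2} and the other by a CHEVIE computation \cite{chevie}. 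Without supplying these verifications (or an equivalent substitute, e.g.\ fixed-point-ratio estimates over all maximal subgroups in these specific cases), your argument does not yield the existence of a generating pair in every top combination, and hence the sum can a priori have a top summand that is identically $0$.
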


Note that, while in \cite{lish23}, $(2,3)$-generation of classical
groups is deduced from random $(2,3)$-generation established for
these groups, the
deduction here is in the reverse direction: we need to know that
exceptional groups are $(2,3)$-generated in order to prove
that they are randomly $(2,3)$-generated.

Combining  Theorem \ref{23excep}  with \cite{lish23} (for alternating
and classical groups), we obtain the following.

\begin{corollary} \label{23cor}  For $T$ simple, as $|T| \to \infty$, 
\[
P_{2,3}(T) \to \left\{ \begin{array}{l}
0,\;T = \,^2\!B_2(2^a),\,PSp_4(2^a),\,PSp_4(3^a) \\
\frac{1}{2},\,T = PSp_4(p^a),\,p\ne 2,3, p \hbox{ prime} \\
1,\,\hbox{ otherwise}.
\end{array}
\right.
\]
\end{corollary}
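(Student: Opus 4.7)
The plan is to assemble the stated trichotomy by case analysis over the families of finite simple groups, drawing on \cite{lish23} for alternating and classical groups together with Theorem \ref{23excep} for the exceptional ones. Once those inputs are in hand the argument is a short bookkeeping exercise, so I do not expect a substantive new obstacle; the only serious ingredient is Theorem \ref{23excep} itself, which has already been established.

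First I would observe that the finitely many sporadic simple groups are irrelevant to the limit. For the alternating groups $A_n$ ($n\ge 5$), the work of \cite{lish23} gives $P_{2,3}(A_n)\to 1$, so they fall into the ``otherwise'' case.

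Next I would treat the classical simple groups. By \cite{lish23}, for every classical simple type $T$ apart from $\PSp_4$ one has $P_{2,3}(T)\to 1$. The $\PSp_4$ case splits according to characteristic, and here I simply quote the computation recalled in the introduction (from \cite{lish23}): $P_{2,3}(\PSp_4(q))=0$ whenever $(q,6)\ne 1$, accounting for the entries $\PSp_4(2^a)$ and $\PSp_4(3^a)$ with limit $0$; and $P_{2,3}(\PSp_4(q))\to \frac{1}{2}$ as $q\to\infty$ through prime powers with $(q,6)=1$, accounting for the entry with limit $\frac{1}{2}$.

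Finally I would dispose of the remaining exceptional simple groups of Lie type. For the Suzuki groups $T={}^2\!B_2(q)$ with $q=2^{2a+1}$, the order $|T|=q^2(q-1)(q^2+1)$ is coprime to $3$ (since $q\equiv 2\pmod 3$ forces $q-1\equiv 1$ and $q^2+1\equiv 2\pmod 3$), so $T$ has no elements of order $3$ and $P_{2,3}(T)=0$ trivially. For the finite simple exceptional groups of Lie type that are not Suzuki groups, Theorem \ref{23excep} applies directly and yields $P_{2,3}(G_i)\to 1$ as $|G_i|\to\infty$. Amalgamating all cases produces the claimed piecewise limit, completing the proof.
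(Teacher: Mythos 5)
Your proposal is correct and follows the same route as the paper, which deduces the corollary in one line by combining Theorem \ref{23excep} (non-Suzuki exceptional groups) with the results of \cite{lish23} for alternating and classical groups, including the $\PSp_4$ computations and the trivial Suzuki case. Your explicit case bookkeeping (including the observation that $|{}^2\!B_2(q)|$ is coprime to $3$) just spells out what the paper leaves implicit.
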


These random $(2,3)$-generation results have interesting applications
to residual properties of the modular group (see \cite{lishres}).

The paper comprises three further sections. Section 2 contains various preliminary results on modules and trace maps that are needed for our proofs of the main results. 
In Section 3, we prove Theorems \ref{newmain1}, \ref{newmain2} and Corollaries \ref{prob}--\ref{mainvary}. The final Section 4 contains proofs of our results on $(2,3)$-generation.

\section{Preliminary results on modules and traces}

\subsection{Modules}

We begin with an elementary lemma.

\begin{lemma} \label{elem} Let $k$ be an algebraically closed field and $n$
a positive integer.  The set of pairs $(A,B)$ in 
$\GL_n(k) \times \GL_n(k)$ that
generate an irreducible subgroup of $\GL_n(k)$ is a Zariski dense open subset
of $\GL_n(k) \times \GL_n(k)$.
\end{lemma}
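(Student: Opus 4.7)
The plan is to prove the set is open and nonempty; since $\GL_n(k) \times \GL_n(k)$ is an irreducible variety (an open subset of $\AAA^{2n^2}$), any nonempty open subset is automatically dense.

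For openness, I would argue that the complement is a finite union of closed subsets. For each $d$ with $1 \le d \le n-1$, set
\[
Z_d = \{(A,B) \in \GL_n(k)\times \GL_n(k) : \exists\, V \in \mathrm{Gr}(d,n) \text{ with } AV=V \text{ and } BV=V\}.
\]
Then $\langle A,B\rangle$ acts reducibly on $k^n$ iff $(A,B)\in \bigcup_{d=1}^{n-1} Z_d$. To see each $Z_d$ is closed, consider the incidence variety
\[
\Phi_d = \{(A,B,V) \in \GL_n(k)\times \GL_n(k)\times \mathrm{Gr}(d,n) : AV\subseteq V,\ BV\subseteq V\}.
\]
For fixed $V$, the conditions $AV\subseteq V$ and $BV\subseteq V$ are Zariski-closed (cut out by vanishing of certain minors), so $\Phi_d$ is closed in $\GL_n(k)\times \GL_n(k)\times \mathrm{Gr}(d,n)$. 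Since the Grassmannian $\mathrm{Gr}(d,n)$ is projective (hence complete), the projection $\GL_n(k)\times \GL_n(k)\times \mathrm{Gr}(d,n) \to \GL_n(k)\times \GL_n(k)$ is a closed map, so $Z_d$, the image of $\Phi_d$, is closed.

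For nonemptiness I would exhibit an explicit pair. Take $A=\mathrm{diag}(a_1,\ldots,a_n)$ with $a_1,\ldots,a_n$ pairwise distinct nonzero elements of $k$, and let $B$ be the permutation matrix of the $n$-cycle $(1\,2\,\cdots\,n)$. Since the eigenvalues of $A$ are distinct, the $A$-invariant subspaces of $k^n$ are exactly the direct sums of coordinate lines $\langle e_i\rangle$. A subspace of this form is also $B$-invariant iff the corresponding index set is closed under the $n$-cycle, leaving only $0$ and $k^n$. Hence $\langle A,B\rangle$ acts irreducibly, and the open set of irreducibly-generating pairs is nonempty, hence dense.

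There is no serious obstacle here; the only mild point requiring care is justifying closedness of $Z_d$, which is handled cleanly by passing to the incidence variety in the (complete) Grassmannian and projecting.
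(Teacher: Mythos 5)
Your proof is correct, but it takes a genuinely different route from the paper's. For openness, the paper argues via Artin--Wedderburn: $\langle A,B\rangle$ is irreducible if and only if the algebra generated by $A,B$ is all of $M_n(k)$, and since the chain of spans of words of length at most $i$ must stabilize by length $n^2$, irreducibility is equivalent to the words of length at most $n^2$ spanning $M_n(k)$ --- a visibly open condition (non-vanishing of a suitable minor). You instead show the reducible locus is closed by forming, for each $1\le d\le n-1$, the incidence variety $\Phi_d\subseteq \GL_n(k)\times\GL_n(k)\times \mathrm{Gr}(d,n)$ of triples $(A,B,V)$ with $AV\subseteq V$, $BV\subseteq V$, and projecting along the complete Grassmannian. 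Both are standard and valid; your approach is more geometric, avoids the Wedderburn step, and generalizes immediately to $r$-tuples, while the paper's approach has the virtue of producing an explicit word-length bound, which is in the same spirit as the word/trace arguments it uses later (Lemma~\ref{traces} and Corollary~\ref{orderN}). One small point of precision: closedness of $\Phi_d$ must be checked jointly in $(A,B,V)$, not just fibrewise over fixed $V$ as your phrasing suggests; this is routine (work in local charts of $\mathrm{Gr}(d,n)$ where a basis of $V$ varies regularly, and impose vanishing of the $(d+1)\times(d+1)$ minors of the matrices $[M\,|\,AM]$ and $[M\,|\,BM]$), but it is the one step you should state carefully. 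Your nonemptiness witness (diagonal with distinct eigenvalues plus an $n$-cycle permutation matrix) differs from the paper's (diagonal with distinct eigenvalues plus $\lambda I+J$ with $J$ the all-ones matrix) but works for the same reason, and density then follows from irreducibility of $\GL_n(k)\times\GL_n(k)$ exactly as you say.
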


\begin{proof}  First observe that by the Artin-Wedderburn theorem, $\langle A,B\rangle$ is an irreducible subgroup of $\GL_n(k)$ if and only if the algebra generated by $A$ and $B$ is the full matrix algebra $M_n(k)$. If we write $S^i$ for the set of words in $A,B$ of length at most $i$, and $kS^i$ for the linear span of $S^i$, then 
\[
k = kS^0 \subseteq kS^1 \subseteq \cdots kS^i \subseteq kS^{i+1} \subseteq \cdots .
\]
This chain must stabilize at some point -- that is, $kS^i=kS^{i+1}=\cdots$ for some $i$, and clearly $i\le n^2$.
Thus $\langle A,B\rangle$ is irreducible if and only if the linear span of the set of words in $A,B$ of length at most $n^2$ is equal to $M_n(k)$. This is an open condition, so the set of pairs generating an irreducible subgroup is open. Finally, it is non-empty, since $M_n(k)$ can be generated by 2 elements -- for example, it is generated by a diagonal matrix with distinct eigenvalues, together with a matrix of the form $\lambda I+J$, where $J$ is the all 1's matrix and $\lambda$ is a scalar.
\end{proof}


In the next result, recall that for a simple algebraic group $G$ over a field $k$ of characteristic $p$, and a Frobenius endomorphism $F_q$ ($q$ a power of $p$), we write $G^{F_q}=G(q)$, a group of Lie type over $\FF_q$ which can be of untwisted or twisted type.

\begin{lemma}\label{goodmodules}  Let $G$ be a fixed simply connected, simple Lie type of rank $r$, and let $k$ be an algebraically closed field of positive characteristic $p$. There is a finite set ${\mathcal N}_G$ of integers, and an absolute constant $K$ such that the following hold.
\begin{itemize}
\item[{\rm (i)}] There exists a finite collection ${\mathcal S}$ of finite-dimensional irreducible $G(k)$-modules 
such that the set
of proper closed subgroups of $G(k)$ that act irreducibly on every member of ${\mathcal S}$ is conjugate to a group in  
$\{G(q) : q \not \in {\mathcal N}_G\}$ (where the groups $G(q)$ can be of untwisted or twisted type). Moreover, ${\rm max}\{n : n \in {\mathcal N}_G\} < f(r)$, a function of the rank only.   
\item[{\rm (ii)}] For $p>K$, all the modules in ${\mathcal S}$ are restricted modules, and the set consisting of their highest weights is independent of $p$.
\end{itemize}
\end{lemma}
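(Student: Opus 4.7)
The strategy is to build $\mathcal{S}$ by walking through the classification of maximal closed subgroups of $G(k)$ and selecting, for each conjugacy class of maximal subgroup that is \emph{not} a subfield subgroup $G(q)$, an irreducible $G$-module on which that maximal subgroup acts reducibly. The key inputs are (a) the Liebeck--Seitz classification of maximal closed positive-dimensional subgroups of simple algebraic groups, (b) Larsen--Pink-type bounds on the orders of finite ``Lie primitive'' subgroups, and (c) Steinberg's tensor product theorem, which guarantees that the restriction of any restricted irreducible $G(k)$-module to a subfield subgroup $G(q)$ remains irreducible. The last fact is what makes subfield subgroups the ``surviving'' family, and so the whole argument can be formulated in terms of witness modules for everything else.

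For the positive-dimensional case, parabolic subgroups are handled uniformly: if $P$ is a proper parabolic, then $R_u(P)$ is a non-trivial unipotent group, so by Lie--Kolchin it has a non-zero fixed space on any faithful irreducible module $V$, and this fixed space is $P$-stable, hence $V$ is reducible for $P$. A single faithful restricted irreducible suffices. For non-parabolic maximal closed connected subgroups, Liebeck--Seitz gives finitely many conjugacy classes for each fixed Lie type, described in a characteristic-independent way away from a small set of bad characteristics; for each class $H$ one picks an irreducible $G$-module whose restriction to $H$ is known to split (for example, the adjoint module, a fundamental module, or a suitable tensor/Schur construction on the natural module of a classical $G$) and adjoins it to $\mathcal{S}$. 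This yields a finite subcollection $\mathcal{S}_1$ detecting every positive-dimensional proper closed subgroup.

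For finite proper closed subgroups, those that are not subfield subgroups are, for $p$ outside a finite set depending on the rank, Lie primitive and fall into finitely many isomorphism classes, with faithful projective representations of bounded dimension. Choosing finitely many restricted irreducible $G(k)$-modules of dimension exceeding all these bounds (and of the appropriate ``generic'' highest weights) rules out irreducibility for every such finite subgroup; adjoin these to form $\mathcal{S}_2$, and set $\mathcal{S}=\mathcal{S}_1\cup\mathcal{S}_2$. The set $\mathcal{N}_G$ is then defined to consist of the finitely many small values of $q$ where either the classification acquires extra exceptional subgroups, or a sporadic irreducible embedding occurs that our witnesses fail to detect. The Liebeck--Seitz and Larsen--Pink bounds are all polynomial in the rank, so $\max \mathcal{N}_G$ is bounded by a function of $r$ alone, proving (i). For (ii), once $\mathcal{S}$ is fixed, its members have finitely many highest weights, whose fundamental-weight coefficients are all bounded by some constant; taking $K$ larger than this constant makes every module in $\mathcal{S}$ restricted for $p>K$, and because the classification above is characteristic-independent in large characteristic, the set of highest weights can be chosen not to depend on $p$.

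The main obstacle is the characteristic-dependent portion of the subgroup classification: the ``geometric'' maximal subgroups and the maximal Lie primitive finite subgroups behave uniformly only for $p$ outside a finite exceptional set, and a handful of small characteristics (notably $p=2,3$ in the exceptional types) require ad hoc verification that the chosen witness modules do in fact split on the relevant subgroups. Absorbing all such small $q$ into $\mathcal{N}_G$ and checking that the remaining cases are genuinely uniform is where most of the work lies; everything else is extracted essentially verbatim from the existing classification.
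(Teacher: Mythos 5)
Your overall philosophy (witness modules for each family of maximal subgroups, with subfield groups surviving because restricted irreducibles restrict irreducibly to $G(q)$) is the right one, but it amounts to a sketch of how one would prove the classification theorems that the paper simply quotes, and as written it has two genuine gaps. First, the treatment of finite subgroups does not work as stated: Larsen--Pink-type results do not say that a Lie primitive finite subgroup which is not a subfield group falls into finitely many isomorphism classes with bounded-dimensional faithful projective representations. They give a normal subgroup of bounded index which is a (possibly trivial) product of finite simple groups of Lie type in characteristic $p$, and such subgroups can have unbounded order. The crucial step -- that a Lie primitive finite subgroup of unbounded order with characteristic-$p$ Lie-type core must be (up to normalizers) a subfield subgroup of the \emph{same} type as $G$, or else be reducible on one of the chosen modules -- is exactly the hard content of the results the paper cites, namely Guralnick--Tiep (Theorems 2.9 and 11.7 of \cite{gurtiep}) and Liebeck--Seitz \cite{lieseiadj}; your proposal uses it as an off-the-shelf input. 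Second, even granting that, your argument only shows that a proper closed subgroup irreducible on all of $\mathcal{S}$ is \emph{contained in} a maximal subgroup of subfield type; the lemma asserts it is conjugate to some $G(q)$ with $q\notin\mathcal{N}_G$, so one still needs a descent through the proper subgroups of the finite groups $G(q)$ themselves, which your outline never addresses.

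A further point is small characteristic. The set $\mathcal{N}_G$ only discards small values of $q$; it cannot absorb small primes $p$, since the lemma must supply a collection $\mathcal{S}$ for \emph{every} characteristic. So the characteristics $p\le K$ cannot be dismissed as ad hoc verification: the paper handles them by a different mechanism, taking $\mathcal{S}$ to be a single module furnished by \cite[Theorem 11.7]{gurtiep}, valid in all characteristics, and only for $p>K$ (with $K$ an absolute constant fixed in advance, the order of the Monster) does it use the composition factors of $V\otimes V^*$, $V^{\otimes 4}$, $S^3(V)$ (classical types) or the adjoint module (exceptional types), plus one restricted module $V(K\omega_1)$ of enormous dimension to eliminate the bounded collection of exceptional irreducible subgroups. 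That last ``big module'' device does coincide with your $\mathcal{S}_2$ idea, but note that your $K$ is defined a posteriori from the highest weights you happen to choose, whereas the statement requires an absolute constant independent of the construction.
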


\begin{proof} 
This follows from results in \cite{gurtiep} and \cite{lieseiadj}, as follows.
Define $K$ to be the maximum order of a group that appears either in \cite[Table 1.1]{lieseiadj} or collection ${\mathcal E}_1$ of \cite[Theorem 2.9]{gurtiep}; so in fact, $K$ is equal to the order of the Monster sporadic group. 

For $p\le K$ we take ${\mathcal S}$ to consist of the single irreducible module provided by \cite[Theorem 11.7]{gurtiep}.
That result asserts that any proper closed subgroup that is irreducible on this module must be a subgroup of the form $G(q)$.

 For $p>K$ and $G = Cl(V)$ of classical type with $\dim V \ge 5$, we take ${\mathcal S}$ to consist of the set of modules given in \cite[Theorem 2.9]{gurtiep} -- that is, the composition factors of $V\otimes V^*$, $V^{\otimes 4}$ and $S^3(V)$ -- together with the irreducible of highest weight $K\omega_1$. Theorem 2.9 of \cite{gurtiep} states that any proper closed subgroup that is irreducible on all these modules is either a subgroup $G(q)$, or a member of the collection 
${\mathcal E}_1$ mentioned above; but members of this collection cannot be irreducible on the last module 
$V(K\omega_1)$, since this has dimension larger than the order of any of the groups in the collection, 
by the definition of $K$. 
For $G = Cl(V)$ with $\dim V\le 4$ (i.e. $G$ of type $A_1,A_2,A_3$ or $C_2$, still with $p>K$), it is straightforward to define a suitable collection ${\mathcal S}$, and we leave this to the reader. 

Finally, for $p>K$ and $G$ of exceptional type, we take ${\mathcal S}$ to consist of the adjoint module together with a further restricted module of dimension greater than $K$ (for example $V(K\omega_1)$ again), and the conclusion follows from \cite[Theorem 1]{lieseiadj}. 
\end{proof}

\begin{remark}\label{defM}{\rm  Let ${\mathcal N}_G$ be as defined in the lemma, and let $M = M_G = {\rm max}\{n : n \in {\mathcal N}_G\}$ (the maximum taken over all characteristics). Then for $q>M$, the group $G(q)$ acts irreducibly on all the modules in the collection ${\mathcal S}$.}
\end{remark}

Continue to let $G$ be a simply connected Lie type, and define ${\mathcal S}$, ${\mathcal N}_G$, $K$ as in the previous result.
 For a prime $p$, and $k = \bar \F_p$, and $r\ge 2$, define 
\begin{equation}\label{wdef}
W_r'(k) = \{x \in G(k)^r \;|\; {\langle x\rangle} \hbox{ acts reducibly on some module in }{\mathcal S}\}.
\end{equation}
Let $W_r(k)$ denote the complement of $W_r'(k)$ in $G(k)^r$.

We also need to define similar subsets in varying characteristics. To do this, now take 
 $G$ to be a Chevalley group scheme, and denote by $s$ the product of all the primes less than $K$.
Then Lemma \ref{goodmodules}(ii) shows that there is a subscheme $W_r'$ of $G^r$ defined over $\ZZ[\frac{1}{s}]$, such that for 
each $k = \bar \FF_p$ ($p>K$), $W_r'(k)$ is as defined in (\ref{wdef}). Let $W_r$ be the complement of $W_r'$.



\begin{lemma} \label{n large} Let $p$ be a prime, and $k = \bar \FF_p$.
\begin{itemize}
\item[{\rm (i)}]  Then $W_r(k) = \{x \in G(k)^r \;|\; G(q)^g \le \langle x\rangle \hbox{ for some }g\in G(k),\,q \not \in {\mathcal N}_G\}$.
\item[{\rm (ii)}]  $W_r(k)$ is a dense open subset of $G(k)^r$.
 \item[{\rm (iii)}] $W_r(k)$ is defined over $\F_p$.
\end{itemize}
\end{lemma}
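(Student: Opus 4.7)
The plan is to prove the three parts in sequence, with Lemma \ref{goodmodules}(i) doing the heavy lifting in each.

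For (i), I would first record the elementary but crucial observation that any finitely generated subgroup of $G(\bar\FF_p)$ is finite: since $G(\bar\FF_p)=\bigcup_a G(\FF_{p^a})$ and each $G(\FF_{p^a})$ is finite, for any $x\in G(k)^r$ I can choose $a$ so that all $x_i$ lie in the finite group $G(\FF_{p^a})$, forcing $\langle x\rangle$ to be finite and hence already Zariski closed. In particular $\langle x\rangle$ is automatically a proper closed subgroup of the infinite group $G(k)$. By definition $x\in W_r(k)$ says $\langle x\rangle$ acts irreducibly on every module in $\mathcal{S}$, and Lemma \ref{goodmodules}(i) then forces $\langle x\rangle=G(q)^g$ for some $g\in G(k)$ and $q\notin\mathcal{N}_G$, which gives $G(q)^g\le\langle x\rangle$. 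Conversely, if $\langle x\rangle\supseteq G(q)^g$ with $q\notin\mathcal{N}_G$, then by the same lemma $G(q)^g$ acts irreducibly on every $V\in\mathcal{S}$, so $\langle x\rangle$ does too and $x\in W_r(k)$.

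For (ii), openness comes from extending the argument of Lemma \ref{elem} to $r$-tuples: for each $V\in\mathcal{S}$ with representation $\rho_V: G\to\GL(V)$, the condition that the span of words of length at most $(\dim V)^2$ in $\rho_V(x_1),\ldots,\rho_V(x_r)$ equals the full matrix algebra $M(V)$ is an open condition, so the set of $x$ with $\langle x\rangle$ acting irreducibly on $V$ is open in $G(k)^r$. Intersecting over the finite collection $\mathcal{S}$ yields that $W_r(k)$ is open. For non-emptiness I would pick any $q>M$ (possible since $\mathcal{N}_G$ is finite), use the fact that $G(q)$ is generated by some $r$-tuple (e.g. padding a 2-generating pair with identities) to exhibit $x$ with $\langle x\rangle=G(q)$, and then invoke Lemma \ref{goodmodules}(i) to get $x\in W_r(k)$. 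Density then follows from irreducibility of $G^r$ (as $G$ is connected): any non-empty open subset of an irreducible variety is dense.

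For (iii), I would note that each module in $\mathcal{S}$ arises from an irreducible rational representation $\rho_V$ of $G$ defined over $\FF_p$, since the simply connected Chevalley group scheme and its restricted irreducible modules admit $\FF_p$-structures. The reducibility condition from (ii) is then a polynomial condition on the matrix entries of the $\rho_V(x_i)$ with coefficients in $\FF_p$, so $W_r'(k)$ is defined over $\FF_p$, and so is its complement $W_r(k)$.

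None of the three parts poses a serious obstacle once Lemma \ref{goodmodules}(i) is in hand; the remaining work is essentially bookkeeping. The point that I would be most careful about is the claim in (i) that the abstract subgroup $\langle x\rangle$ is already closed, so that Lemma \ref{goodmodules}(i) applies to it directly rather than to its Zariski closure; this is precisely why I would put the ``finitely generated subgroups are finite'' observation at the very start.
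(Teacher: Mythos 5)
Your proof is correct and follows essentially the same route as the paper: part (i) from Lemma \ref{goodmodules}, openness via the word-span argument of Lemma \ref{elem} together with Steinberg's $2$-generation of $G(q)$ for non-emptiness, and $\FF_p$-rationality of the irreducible modules for (iii). Your opening observation that $\langle x\rangle$ is finite (hence closed and proper), so that Lemma \ref{goodmodules}(i) applies to it directly, is exactly the step the paper leaves implicit in its one-line proof of (i).
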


\begin{proof} 
Part (i) follows from Lemma \ref{goodmodules}. 
For (ii), observe that  $W_r(k)$ is open by Lemma \ref{elem}, and it is non-empty since any group $G(q)$ can be generated by 2 elements (by \cite{St}).
Finally, (iii) holds since in fixed characteristic $p$, irreducible $G$-modules are defined over $\F_p$.
\end{proof}

\begin{lemma} \label{scottlemma} Let $k = \bar \FF_p$, and suppose that  $C,D$ are conjugacy classes of $G(k)$
such that  $(C \times  D) \cap W_2(k)$ is non-empty.
Then 
$\dim C + \dim D  > \dim G$. 
\end{lemma}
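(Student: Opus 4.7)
The plan is to apply Scott's cohomological dimension formula to the (simple constituent of the) adjoint module $\mathfrak{g}=\operatorname{Lie}(G)$ of $G$.

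First, the hypothesis $(c,d)\in W_2(k)$ together with Lemma~\ref{goodmodules} forces the subgroup $H:=\langle c,d\rangle$ to act irreducibly on every module in the collection $\mathcal{S}$. For $G$ of exceptional type, $\mathcal{S}$ contains $\mathfrak{g}$ itself by construction; for $G$ of classical type, the simple top quotient $V_0$ of $\mathfrak{g}$ appears as a composition factor of $V\otimes V^{\ast}$, which is covered by $\mathcal{S}$. So in both cases $H$ is irreducible on a simple module $V_0$ with $\dim(\mathfrak{g}/V_0)=\delta\in\{0,1\}$, and both $V_0^H$ and the coinvariants $(V_0)_H$ vanish.

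Next, since $c,d,(cd)^{-1}$ generate $H$ and multiply to the identity, Scott's formula applied to $V_0$ yields
$$\dim V_0^c+\dim V_0^d+\dim V_0^{cd}\;\leq\;\dim V_0.$$
I would combine this with the two standard inequalities $\dim C_G(x)\leq\dim\mathfrak{g}^x$ (because $\operatorname{Lie}(C_G(x))\subseteq\mathfrak{g}^x$) and $\dim\mathfrak{g}^x\leq\dim V_0^x+\delta$, summed over $x\in\{c,d,cd\}$, to obtain
$$\dim C_G(c)+\dim C_G(d)+\dim C_G(cd)\;\leq\;\dim G+2\delta.$$
Because $\dim C_G(cd)\geq\operatorname{rank}(G)$ (the semisimple part of $cd$ lies in some maximal torus whose full centraliser fixes $cd$), and $\operatorname{rank}(G)>2\delta$ in all but a short list of low-rank, bad-characteristic cases, this forces $\dim C_G(c)+\dim C_G(d)<\dim G$. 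The orbit-stabiliser identities $\dim C=\dim G-\dim C_G(c)$ and $\dim D=\dim G-\dim C_G(d)$ then give the desired inequality $\dim C+\dim D>\dim G$.

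The main obstacle I anticipate is the careful identification of $V_0$ as a simple composition factor of $\mathfrak{g}$ uniformly across all types and characteristics, together with the bounded-but-nonzero slack $\delta$ in bad characteristic, which is what prevents a completely uniform application of Scott. The small number of low-rank exceptions (such as $G=\operatorname{SL}_2$ in characteristic $2$) I would handle directly: any pair $(c,d)$ with $\langle c,d\rangle$ containing a copy of $G(q)$ must have both $c$ and $d$ non-central, so their conjugacy classes are already large enough that $\dim C+\dim D>\dim G$ can be verified by inspection.
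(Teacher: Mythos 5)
Your overall strategy (Scott's Lemma applied to the adjoint representation, with centraliser dimensions bounded by fixed-point dimensions) is the same as the paper's, but there is a genuine gap in the structural input. You pass to a simple constituent $V_0$ of $\mathfrak{g}=\operatorname{Lie}(G)$ and assert $\delta=\dim(\mathfrak{g}/V_0)\in\{0,1\}$, i.e.\ that the adjoint module is one big simple factor plus at most a one-dimensional trivial piece. This is false in exactly the delicate characteristics: for $B_n$ and $C_n$ in characteristic $2$ the adjoint module has the $2n$-dimensional natural module as a composition factor in addition to $L(\omega_2)$ and one or two trivials (so $\delta\approx 2n+2$); for $F_4$, $p=2$, it has two nontrivial factors of dimension $26$, and for $G_2$, $p=3$, two of dimension $7$. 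Consequently your closing step, which needs $\operatorname{rank}(G)>2\delta$, fails not for a ``short list of low-rank cases'' but for entire families ($B_n,C_n$, $p=2$, all $n$). A second, smaller issue: for $p\le K$ the collection ${\mathcal S}$ is a single Guralnick--Tiep module unrelated to the adjoint, so membership of $(c,d)$ in $W_2(k)$ does not by itself give irreducibility of $H=\langle c,d\rangle$ on $V_0$; one must instead invoke Lemma \ref{n large}(i) (that $H$ contains a conjugate of some $G(q)$ with $q\notin{\mathcal N}_G$) and the fact that such $G(q)$ has the same invariant subspaces on $\mathfrak{g}$ as $G$, which is how the paper argues.

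The paper avoids the $\delta$ problem by applying Scott's Lemma to the \emph{full} adjoint module $A$: it shows $[G,A]=A$ (using Hogeweij's classification of ideals of $\operatorname{Lie}(G)$ for $G$ simply connected) and bounds $\dim A^G$ by the number of trivial composition factors, which is at most $2$ by Liebeck--Seitz; this yields $\dim C+\dim D\ge\dim G+\operatorname{rank}(G)-\delta$ with $\delta\le 2$, leaving only $G=A_1$ and $(G,p)=(C_2,2)$ to treat separately. Your proposed treatment of the residual cases ``by inspection of class dimensions'' also does not work for $(C_2,2)$: $Sp_4$ in characteristic $2$ has non-central classes of dimensions $4,4$ and $6$, so a pair of non-central classes can well satisfy $\dim C+\dim D\le 10=\dim G$, and ruling out generation there requires the paper's genuinely extra argument (the $3$-dimensional fixed space of a long root element on the natural $4$-dimensional module must meet a $\ge 2$-dimensional eigenspace of the second element, contradicting irreducibility). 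To repair your proof you would either have to replace the claim $\delta\le 1$ by the correct composition-factor data and redo the numerics (which then forces the full-adjoint formulation anyway), or adopt the paper's route.
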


\begin{proof}   Let $A = {\rm Lie}(G)$, the Lie algebra of $G=G(k)$.  By assumption, we can choose $(x,y) \in C\times D$ such
that $\langle x, y \rangle$ contains $G(q)$ for some $q>M$, hence has the same set of invariant subspaces on $A$  as $G$ does.
Let $r = {\rm rank}(G)$.

Set $z=xy$.  Now applying Scott's Lemma \cite{scott}, we have
$$ 
\dim [x,A] + \dim [y,A] + \dim [z,A]  \ge \dim A + \dim [G,A] - \dim A^G.
$$
For $g \in G(k)$ we have $\dim [g,A] = \dim A  - \dim A^g \le \dim G - \dim C_G(g)$.
The last term
is the dimension of the conjugacy class of $g$, and so we have
$\dim [x,A] \le \dim C$ and $\dim [y,A] \le \dim D$.
Also, the dimension of the conjugacy class of $z$ is at most 
$\dim G - r$, whence $\dim [z,A] \le \dim G - r$. Combining
the above inequalities we obtain the inequality
\begin{equation}\label{ineqq}
\dim C + \dim D \ge   \dim [G,A] - \dim A^G + r.
\end{equation}
 
We claim that $A=[G,A]$. To see this, observe that $[G,A]$ is an ideal of $A = {\rm Lie}(G)$ and $G$ acts trivially on the quotient. Since $G$ is simply connected, 
inspection of \cite[Table 1]{hog} shows that there are no such proper ideals of $A$, which proves the claim. 
It follows that
$\dim [G,A] - \dim A^G = \dim G - \delta$, where
$\delta$ is at most the number of trivial composition factors
of $G$ on $A$.  This number is at most 1 unless $p=2$ and $G=B_r,C_r$ or $D_r$, all with $r$ even, in which case it is 2  (see \cite[Prop. 1.10]{lstams}). 
Now (\ref{ineqq}) gives $\dim C+\dim D \ge \dim G+r-\delta$, and the conclusion follows 
unless either  $G = A_1$ or $(G,p)=(C_2,2)$.  

Suppose $G=A_1$. Then 
every nontrivial conjugacy class has dimension at least $2$, whence $\dim C+\dim D > \dim G$, as required. 

Finally,  consider
$(G,p)=(C_2,2)$. Here $G$ has two conjugacy classes of dimension 4, namely the classes of long and short root elements, and the other classes have dimension at least 6. Assume that $\dim C + \dim D \le \dim G = 10$. Then adjusting $C,D$ by a graph automorphism of $G$ if necessary, we may take one of the classes, say $C$, to contain  a long root element $x = u_{\alpha}$. Then $x$ has fixed point space of dimension 3 on the natural 4-dimensional module $V$ for $G = Sp_4$. Now $\dim D \le 6$, and it is easy to see that an element $y$ in such a class must have an eigenspace on $V$ of dimension at least 2. This eigenspace intersects the fixed space of $x$ nontrivially, so $x,y$ cannot generate an irreducible subgroup of $G$, which is a contradiction. Hence $\dim C + \dim D > \dim G$, completing the proof. 
\end{proof}

If $k = \bar \FF_p$ and $C,D$ are conjugacy classes of $G=G(k)$ such that $\dim C + \dim D  \le \dim G$, the next lemma shows there are severe limitations on the possiblilities for the subgroup $\langle c,d\rangle$, where $(c,d) \in X = C\times D$.

 \begin{lemma}\label{xleg}
Let $X$ be an irreducible subvariety of $G^r$ ($r\ge 2$), and suppose that $X$ is invariant under conjugation by $G$.
\begin{itemize}
\item[{\rm (i)}] If $\dim X < \dim G$, then every $r$-tuple in $X$ generates a subgroup of a proper parabolic subgroup.
\item[{\rm (ii)}] If $\dim X=\dim G$, then one of the following holds:
\begin{itemize}
\item[{\rm (a)}] $G$ has a dense open orbit on $X$, and every $r$-tuple in the orbit generates a conjugate of a fixed finite group $H$ such that $C_G(H)$ is finite; the other $r$-tuples in $X$ generate a subgroup of order at most $|H|$ of a parabolic subgroup;
\item[{\rm (b)}] every $r$-tuple in $X$ generates a subgroup of a parabolic subgroup.
\end{itemize}
\end{itemize}
\end{lemma}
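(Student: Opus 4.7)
The plan is to exploit the $G$-action on $X$ by simultaneous conjugation, which is well-defined since $X$ is $G$-invariant. For each $x = (x_1,\ldots,x_r) \in X$ the stabilizer is $\bigcap_i C_G(x_i) = C_G(\langle x\rangle)$, so orbit-stabilizer yields
\[
\dim G\cdot x \;=\; \dim G - \dim C_G(\langle x\rangle) \;\le\; \dim X.
\]
A key sub-claim is that whenever $C_G(\langle x\rangle)$ has positive dimension, $\langle x\rangle$ lies in a proper parabolic subgroup. Indeed, $C_G(\langle x\rangle)^{\circ}$ is then a positive-dimensional connected algebraic subgroup, so it either contains a nontrivial torus $T$ (in which case $\langle x\rangle \le C_G(T)$, a Levi subgroup contained in a proper parabolic) or else is unipotent and contains a nontrivial unipotent element $u$, whence $\langle x\rangle \le C_G(u) \le N_G(\overline{\langle u\rangle})$ lies in a proper parabolic by the Borel--Tits theorem applied to the closed unipotent subgroup $\overline{\langle u\rangle}$.

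Case (i) then follows immediately: if $\dim X < \dim G$, the dimension formula forces $\dim C_G(\langle x\rangle) > 0$ for every $x \in X$, and the sub-claim concludes. In case (ii), if $\dim C_G(\langle y\rangle) > 0$ for every $y \in X$ then we are in alternative (b) by the same argument; otherwise there exists $x \in X$ with $C_G(\langle x\rangle)$ finite, so $\dim G\cdot x = \dim G = \dim X$. Since orbits are locally closed and $X$ is irreducible, the orbit $Gx$ must then be dense and open in $X$. Setting $H = \langle x\rangle$ (a finite group with finite centralizer $C_G(H)$), every tuple in $Gx$ is a $G$-conjugate of $x$ and so generates a conjugate of $H$.

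It remains to control $\langle y\rangle$ for $y$ in the complement $X \setminus Gx$. These points have smaller orbit, hence positive-dimensional centralizer, which places $\langle y\rangle$ in a proper parabolic by the sub-claim. To bound its order, I would invoke upper semicontinuity of word relations: for each word $w$ in $r$ letters, the vanishing locus $\{z \in X : w(z_1,\ldots,z_r) = 1\}$ is Zariski-closed in $X$, so if $w$ vanishes on the dense open orbit $Gx$ it must vanish on all of $X$. Equivalently, the kernel of the surjection $F_r \twoheadrightarrow H$ sending the free generators to the components of $x$ is contained in the kernel of $F_r \twoheadrightarrow \langle y\rangle$, exhibiting $\langle y\rangle$ as a quotient of $H$, and in particular a finite group of order at most $|H|$. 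The main obstacle is precisely this last bound: the dimension-counting half of the proof is essentially formal once Borel--Tits is in hand, but it is not a priori obvious that $\langle y\rangle$ should even be finite on the boundary, and this is where the irreducibility of $X$ (via density of the open orbit and closedness of word-relation loci) enters in an essential way.
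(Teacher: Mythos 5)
Your argument is correct and follows the same route as the paper's (very terse) proof: the $G$-orbit of $x$ lies in $X$ because $X$ is conjugation-invariant, so the orbit--stabilizer dimension count forces $C_G(\langle x\rangle)$ to be positive-dimensional when $\dim X<\dim G$ or when the orbit is not dense, and then $\langle x\rangle$ lies in a proper parabolic; in fact you supply details the paper omits, namely the torus/Levi versus unipotent/Borel--Tits dichotomy behind ``positive-dimensional centralizer implies parabolic'', and the word-map argument (relation loci are closed, hence relations holding on the dense orbit hold on all of $X$, so $\langle y\rangle$ is a quotient of $H$) where the paper simply writes ``Part (ii) follows''.

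One point you assert without justification is the parenthetical claim that $H=\langle x\rangle$ is finite. Finiteness of $C_G(H)$ does not imply this over a general algebraically closed field: over $\CC$, a pair generating a Zariski-dense free subgroup has finite centralizer, its orbit closure is an irreducible $G$-invariant subvariety of dimension $\dim G$, and the dense orbit then consists of tuples generating an infinite group. What saves the statement here is the standing hypothesis $k=\bar\FF_p$: the entries of the $x_i$ in a faithful representation generate a finite field, so every finitely generated subgroup of $G(k)$ is finite. You should add this one-line observation (it is also needed so that ``order at most $|H|$'' has content in your quotient argument); with it included, your proof is complete and slightly more detailed than the one in the paper.
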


\begin{proof}
(i) Suppose $\dim X < \dim G$. Then any $x \in X$ has a positive-dimensional centralizer in $G$, hence $\langle x \rangle$ is contained in a parabolic subgroup. 

(ii) Now suppose $\dim X=\dim G$, and let $x \in X$. If the orbit $x^G$ has dimension less than $\dim G$, then $\langle x \rangle$ is contained in a parabolic subgroup, as in (i). Otherwise, $x^G$ is a dense orbit, and the centralizer $C_G(x)$ is a finite group. Part (ii) follows.
\end{proof}

\subsection{Traces}

\begin{lemma}\label{traces}
  Let $p$ be a prime and  $k = \bar \FF_p$. Let 
$S$ be a finite subset of $GL_d(k)$ that generates an irreducible
subgroup, and for $m\ge 1$ let $S^m$ denote the set of all words in
$S$ of length at most $m$.   Let $E$ be the subfield of $k$
generated by $\{\tr(T) \,:\, T \in S^{2d^2}\}$.   Then the $\FF_p$-algebra generated
by $S$ is $GL_d(k)$-conjugate to $M_d(E)$, and 
$\langle S \rangle$ is conjugate to a subgroup of $GL_d(E)$.  
\end{lemma}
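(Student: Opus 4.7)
The plan is to use the non-degenerate trace form on $M_d(k)$ together with an induction on word length. Set $A:=\FF_p[S]$. By the Artin-Wedderburn argument from the proof of Lemma \ref{elem}, irreducibility of $\langle S\rangle$ implies that the $k$-algebra $kA$ equals $M_d(k)$, and the usual strictly-growing chain of $k$-subspaces $k=kS^0\subseteq kS^1\subseteq\cdots$ of $M_d(k)$ (which must reach $M_d(k)$ as soon as two successive subspaces agree) yields $kS^{d^2-1}=M_d(k)$. I will pick $T_1,\ldots,T_{d^2}\in S^{d^2-1}$ forming a $k$-basis of $M_d(k)$. The Gram matrix $G=(\tr(T_iT_j))$ then has entries in $E$ --- each $T_iT_j$ has length at most $2d^2-2$ --- and by non-degeneracy of the trace form is invertible; hence $G^{-1}$ also lies over $E$.

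Let $B$ denote the $E$-span of $T_1,\ldots,T_{d^2}$ inside $M_d(k)$. The identity $(\tr(xT_m))_m=(c_i)_iG$ (for $x=\sum c_iT_i\in M_d(k)$) shows that $x\in B$ if and only if $\tr(xT_m)\in E$ for every $m$. I will then prove by induction on $L$ that $S^L\subseteq B$. The base case is direct: $\tr(T_m)$ and $\tr(sT_m)$ are traces of words of length at most $d^2\le 2d^2$, so $1$ and all of $S$ lie in $B$. The key preparatory observation is that each $sT_j$ itself lies in $B$, since $sT_jT_m\in S^{2d^2-1}$ and so $\tr(sT_jT_m)\in E$. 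For the inductive step, given $w=sw'$ with $w'\in S^L\subseteq B$, I expand $w'=\sum b_jT_j$ with $b_j\in E$ and conclude $w=\sum b_j(sT_j)\in B$ by $E$-linearity. Hence $A\subseteq B$.

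It then follows that $T_iT_j\in A\subseteq B$ for all $i,j$, so $B$ is closed under multiplication. Thus $B$ is an $E$-subalgebra of $M_d(k)$ of $E$-dimension $d^2$ with $kB=M_d(k)$, making it a central simple $E$-form of $M_d(k)$; triviality of the Brauer group of the finite field $E$ (Wedderburn) gives $B\cong M_d(E)$. To identify $A$ with $B$ I will first check that $A$ is semisimple --- the ideal $k\cdot J(A)$ is two-sided in the simple ring $M_d(k)$, so it is $0$ or all of $M_d(k)$, but the latter is incompatible with the nilpotence of $J(A)$, forcing $J(A)=0$ --- so that $A\cong M_d(F)$ for the finite field $F=Z(A)=A\cap(kI)$. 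The inclusions $F\cdot I\subseteq A\subseteq B$ together with $Z(B)=E\cdot I$ give $F\subseteq E$; the reverse $E\subseteq F$ is immediate since traces of elements of $A$ lie in its center. Hence $F=E$, and comparing $\FF_p$-dimensions forces $A=B\cong M_d(E)$. A final application of Skolem-Noether to the two embeddings of $M_d(E)$ into $M_d(k)$ --- both restricting to the inclusion $E\hookrightarrow k$ on the center --- exhibits an element of $GL_d(k)$ conjugating $A$ onto the standard $M_d(E)\subseteq M_d(k)$; since $\langle S\rangle\subseteq A$, the same element conjugates $\langle S\rangle$ into $GL_d(E)$.

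The main subtlety will be the length bookkeeping: the bound $2d^2$ is exactly tight for a basis drawn from $S^{d^2-1}$, which is why it is important to characterize membership in $B$ via the $E$-valued pairing $\tr(xT_m)$ rather than via the structure constants of $B$ in the $T$-basis, which would require traces of triple products of length up to $3(d^2-1)$.
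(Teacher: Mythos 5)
Your proposal is correct and takes essentially the same route as the paper: a $k$-basis of $M_d(k)$ made of short words, non-degeneracy of the trace form to pin down the $E$-span $B$, Wedderburn (trivial Brauer group of $E$) to get $B\cong M_d(E)$, and Skolem--Noether for the conjugacy. Your length bookkeeping (basis in $S^{d^2-1}$ and the one-letter-at-a-time induction via the elements $sT_j$) is in fact tighter than the paper's sketch, which takes the basis in $S^{d^2}$ and glosses over the fact that multiplicative closure of the $E$-span would naively involve traces of words longer than $2d^2$; the one step you state without proof---that traces of elements of the central simple $F$-algebra $A$ lie in $F$---is a true standard fact (center-valuedness of the reduced trace), and can be justified, for instance, by the same Skolem--Noether argument you use at the end, applied with $F$ in place of $E$.
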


\begin{proof}  By the Artin-Wedderburn theorem,  the $k$-algebra
generated by $S$ is $M_d(k)$.    Let $Y =\{Y_1, \ldots, Y_{d^2}\} \subset S^{d^2}$  be 
a basis for $M_d(F)$.   
 Note that the map $\phi : M_d(k) \rightarrow k^{d^2}$ given
by $A \rightarrow  (\tr(AY_1), \ldots, \tr(AY_{d^2}))$ is a linear bijection, 
since the $Y_i$ form a basis and the trace form is non-degenerate.

If $B \in Y$, then $BY \subset S^{2d^2}$ and so $\tr(BY_i) \in E$.  
Since the above map $\phi$ is a bijection, it follows that $BY_i$ is in the $E$-span of $Y$.   Thus, the $\FF_p$-algebra generated by
$S$ has cardinality $q^{d^2}$ where $q=|E|$.  Since it acts absolutely irreducibly
on $k^{d}$, it is isomorphic to $M_d(E)$. It follows also that $\langle S \rangle$ is isomorphic to a subgroup of $GL_d(E)$.  
\end{proof}


\begin{corol}\label{orderN}   Let $k$ be the algebraic closure of $\FF_p$, let $G$ be a simple
algebraic group over $k$, and let $V$ be a nontrivial irreducible restricted $d$-dimensional
rational $kG$-module.  Let $X$ be an irreducible subvariety of $G^r$ ($r\ge 2)$, and assume
that some $x \in X$ generates a subgroup acting irreducibly on $V$.  Suppose that for every word $w$ in the free
group on $r$ generators of length at most $2d^2$, the morphism $\tr(w(y))$ is constant
for $y$ in an nonempty open subset of $X$. Then the following hold:
\begin{itemize}
\item[{\rm (i)}] there is a constant $N$ such that the group generated by $y$ for all $y \in X$
has order at most $N$;
\item[{\rm (ii)}] $\dim X \le \dim G$.
\end{itemize}
\end{corol}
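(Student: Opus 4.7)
My plan is to show that the hypothesis on short-length traces forces every $y \in X$ to generate an irreducible subgroup of $\GL(V)$, so that Lemma~\ref{traces} applies uniformly to yield (i); statement (ii) then follows from a finiteness and dimension argument. The first observation is that since $X$ is irreducible, any morphism constant on a nonempty open subset is constant on all of $X$; applying this to the trace morphisms gives constants $\tau_w \in k$ with $\tr(w(y)) = \tau_w$ for every $y \in X$ and every word $w$ of length at most $2d^2$. Let $E \subseteq k$ be the finite subfield they generate. By the obvious $r$-generator extension of Lemma~\ref{elem}, the locus $U \subseteq X$ on which $\langle y\rangle$ acts irreducibly on $V$ is Zariski-open, and nonempty by hypothesis, hence dense in $X$.

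The crux is to upgrade $U = X$. Fix $x_0 \in U$ and, as in the proof of Lemma~\ref{traces}, pick words $v_1, \dots, v_{d^2}$ of length at most $d^2$ whose evaluations $v_i(x_0)$ form a $k$-basis of $M_d(k)$; the subset $U' \subseteq U$ on which $v_i(y)$ remains a basis is open and dense. For $y \in U'$, the Gram matrix $(\tau_{v_iv_j})$ of the trace pairing and the coefficients $\tau_{wv_j}$ (with $|w|\le d^2$) involve only length-$\le 2d^2$ traces and are thus fixed; from these one reads off, independently of $y$, both the structure constants of $M_d(k)$ in the basis $v_i(y)$ and the expansion of any $w(y)$ in this basis. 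Iterating, $\tr(w(y))$ is a fixed polynomial in the $\tau$'s for every word $w$, hence is constant on $U'$; by continuity and density, it is constant on all of $X$.

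Thus the character of the representation $\rho_y \colon F_r \to G \subseteq \GL(V)$ sending the free generators to the components of $y$ agrees on $F_r$, for every $y \in X$. By the Brauer--Nesbitt theorem all semisimplifications $\rho_y^{ss}$ are isomorphic to the irreducible $\rho_{x_0}$; since $\dim \rho_y = d = \dim \rho_y^{ss}$ and the latter is irreducible, $\rho_y$ itself is irreducible, so $U = X$. Applying Lemma~\ref{traces} to every $y \in X$ then yields $\rho_y(\langle y\rangle)$ conjugate in $\GL_d(k)$ to a subgroup of $\GL_d(E)$, giving $|\rho_y(\langle y\rangle)| \le |\GL_d(E)|$. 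Since $G$ is simple with finite center and $V$ is nontrivial, the kernel of $G \to \GL(V)$ is central and finite, so $|\langle y\rangle| \le |Z(G)|\cdot|\GL_d(E)| =: N$, proving (i).

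For (ii), each $\langle y\rangle$ is a finite subgroup of $G$ of order at most $N$; the standard fact that $\Hom(H,G)$ decomposes into finitely many $G$-conjugation orbits for each finite group $H$ provides finitely many $G$-conjugacy classes of such subgroups. For each class $[H]$, the $r$-tuples in $G^r$ generating a conjugate of $H$ form a finite union of $G$-orbits of dimension at most $\dim G - \dim C_G(H) \le \dim G$, so $X$ lies in a finite union of subvarieties of dimension at most $\dim G$, and being irreducible, $\dim X \le \dim G$. I expect the main obstacle to be the second-paragraph passage from length-$\le 2d^2$ trace information to the full character on $F_r$: this rests on Lemma~\ref{traces}'s basis extraction on $U'$ encoding the entire algebra $M_d(k)$ inside those bounded-length traces, after which every longer trace is forced.
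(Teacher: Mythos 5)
Your part (i) is essentially correct, though it takes a heavier route than the paper, which simply notes that ``$|\langle y\rangle|\le N$'' is a closed condition on $y$, so it suffices to argue on the nonempty open locus of tuples acting irreducibly, where Lemma \ref{traces} applies immediately; your upgrade to ``every $y\in X$ acts irreducibly'' is not needed for (i). If you do want that upgrade, two points should be tightened. First, the bookkeeping: expressing the products $v_i(y)v_j(y)$ (your ``structure constants'') requires traces of words of length up to $3d^2$, which the hypothesis does not supply; choose instead basis words of length at most $d^2-1$ (the span of words stabilizes by that length) and iterate multiplication by single generators, so only traces of length at most $2d^2-1$ are used. Second, Brauer--Nesbitt with trace characters alone is delicate over $\bar\FF_p$ when $p\le d$; the statement you need is the refinement that a $d$-dimensional representation whose character equals that of an absolutely irreducible $d$-dimensional representation has the latter as its semisimplification (true in all characteristics, by a Jacobson density argument), or, more directly, note that the Gram matrix $(\tr(v_i(y)v_j(y)))$ is constant on $X$ and invertible at $x_0$, hence invertible at every $y$, which already forces irreducibility everywhere.

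The genuine gap is in part (ii). The ``standard fact'' that $\Hom(H,G)$ has finitely many $G$-conjugation orbits for every finite group $H$ is a theorem in characteristic zero (Weil rigidity, via $H^1(H,{\rm Lie}(G))=0$) and in characteristic $p$ when $p\nmid|H|$, but it is false in exactly the situation of this corollary: here $k=\bar\FF_p$ and the groups $\langle y\rangle$ typically have order divisible by $p$ (they can be subfield groups, for instance). Already $\Hom((\ZZ/p)^2,\GL_2(k))$ has infinitely many orbits in characteristic $p$: the pairs $(I+aE_{12},\,I+bE_{12})$ are pairwise non-conjugate as $(a{:}b)$ varies in $\mathbb{P}^1$. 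Without this finiteness, your covering of $X$ by finitely many orbit sets of dimension at most $\dim G$ collapses (an infinite union of such sets need not have dimension $\le\dim G$), and restricting to subgroups acting irreducibly on $V$ does not obviously restore finiteness up to $G$-conjugacy. The paper avoids rigidity altogether: since on a dense subset of $X$ the group $\langle y\rangle$ is conjugate into the single fixed finite group $H=\GL_d(q)$ (with $\FF_q$ generated by the constant traces), that subset lies in the image of the conjugation morphism $(g,h_1,\ldots,h_r)\mapsto(h_1^{g},\ldots,h_r^{g})$ with $(h_1,\ldots,h_r)$ ranging over the finite set $H^r$, and the source has dimension $\dim G$; hence $\dim X\le\dim G$. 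Replacing your rigidity step by this covering argument is what is needed to complete (ii).
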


\begin{proof} (i)  The condition that the group generated by $y$ has order at most $N$
is a closed condition, so it suffices to prove the conclusion for $y$ in the nonempty open subset 
of $X$ consisting of $r$-tuples generating an irreducible subgroup.   By the previous lemma,
it follows that the group generated by any such $y$ is conjugate to a subgroup of $\GL_d(q)$,
where $\FF_q$ is the field generated by all traces of words of length at most $2d^2$
(the hypotheses imply this is a finite field). Part (i) follows, taking $N = |\GL_d(q)|$.

(ii) Let $H = \GL_d(q)$ be as above, and let $\phi : G\times H^r \to G^r$ be the morphism sending $(g,h_1,\ldots ,h_r) \to (h_1^g,\ldots ,h_r^g)$ ($g\in G, h_i \in H$).
Then ${\rm Im}(\phi)$ contains a nonempty open subset of $X$. Hence $\dim X \le \dim G$.
\end{proof}

In the next result, $R$ denotes a ring of the form ${\mathcal O}[\frac{1}{n}]$, where $n$ is a positive integer, and ${\mathcal O}$ is the ring of integers of an algebraic number field. As before,  for a prime $p$ coprime to $n$, we let $q_i(p)$ ($1\le i\le k_p$) denote the sizes of the residue fields of $R$ in characteristic $p$. Also define
\[
Q_R = \bigcup_{p\nmid n}\bigcup_i\{q_i(p)^a : a \in \N\}.
\]

\begin{lemma}\label{sch}  Suppose that $V$ is an irreducible reduced scheme of finite type of dimension $s$ defined over the ring $R={\mathcal O}[\frac{1}{n}]$.
Let $f$ be a non-constant regular function on $V$ which is defined over $R$. 
There exists a constant $c$ such that for any $q \in Q_R$, the set of $\FF_{q}$-points in any fiber of $f$ has
size at most $c q^{s-1}$.
\end{lemma}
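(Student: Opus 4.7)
I would combine a spreading-out argument over $\Spec(R)$ with a uniform Lang--Weil / B\'ezout upper bound for $\FF_q$-points of closed subschemes of bounded degree and dimension. The key geometric observation is that, since $V$ is irreducible of dimension $s$ and $f$ is non-constant, each fiber of $f$ is a proper closed subscheme of $V$ of dimension at most $s-1$, and this codimension-one behaviour will persist on almost every reduction mod $p$.

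To make this precise, I would first embed $V$ as a locally closed subscheme of some $\PP^N_R$ cut out by equations of bounded degree $D_0$, and view $f$ as a polynomial of bounded degree as well. Since the generic geometric fiber of $f \colon V \to \AAA^1_R$ has dimension $s-1$, Chevalley's constructibility theorem for fiber dimensions, applied both to $V \to \Spec(R)$ and to $f$, produces a non-empty open $U \subseteq \Spec(R)$ with finite complement $T$, such that for every closed point $\mathfrak{p} \in U$ the geometric fiber $V_{\bar\FF_{\mathfrak p}}$ is irreducible reduced of dimension $s$ and $f$ restricts to a non-constant function on it. Then for such $\mathfrak p$ and any $t \in \bar\FF_{\mathfrak p}$, the intersection $f^{-1}(t) \cap V_{\bar\FF_{\mathfrak p}}$ is a proper closed subscheme of the irreducible variety $V_{\bar\FF_{\mathfrak p}}$, hence of dimension at most $s-1$.

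Next I would invoke the standard Lang--Weil-type upper bound: every closed subscheme $Y \subseteq \PP^N_{\FF_q}$ of dimension at most $d$ cut out by polynomials of degree at most $D_0$ satisfies $|Y(\FF_q)| \le C(N, D_0, d)\, q^d$, with $C$ depending only on $N, D_0, d$ (a standard consequence of Noether normalisation and Schwartz--Zippel). Applying this with $d = s-1$ gives $|f^{-1}(t)(\FF_q)| \le c\, q^{s-1}$ uniformly in $\mathfrak p \in U$, $q$, and $t$. The remaining issue is the finitely many primes in $T$: there the spreading-out conclusion may fail, and fibers of $f$ could a priori attain the maximal dimension $s$, so in the application one absorbs them into the finite set $S$ of excluded primes (notably in the proof of Theorem \ref{newmain2}). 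I expect the main technical point to be the simultaneous spreading-out---arranging irreducibility, reducedness \emph{and} non-constancy of $f$ on the geometric fibers at once---but all three conditions are controlled by standard EGA IV constructibility results, so this step is routine once the embedding is fixed.
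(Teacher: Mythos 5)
Your proposal is correct in substance but follows a genuinely different route from the paper. The paper's proof is an elementary induction on $s$: writing $V=\Spec A$, choosing a transcendence base $g=f,f_1,\dots,f_{s-1}$ of the function field over $R$ and invoking generic freeness, it makes $A[1/b]$ a free module of some rank $m$ over $R[g,f_1,\dots,f_{s-1}][1/b]$, so that an $\FF_q$-point of a $g$-fibre of $\Spec A[1/b]$ is determined by the values of $f_1,\dots,f_{s-1}$ up to at most $m$ choices, giving at most $mq^{s-1}$ points per fibre, with $\Spec A/bA$ handled by induction; no projective embedding, degree bookkeeping or Lang--Weil-type input is needed, and the constant is explicit. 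Your route (spread out over an open $U\subseteq\Spec R$, then apply the uniform bound $C(N,D_0,d)q^{d}$ for bounded-degree subschemes of dimension at most $d$) is standard and sound, with two caveats. First, you cannot in general arrange the reductions $V_{\bar\FF_p}$ to be irreducible, since the generic fibre of $V\to\Spec R$ need not be geometrically irreducible; the repair is to work with its boundedly many top-dimensional geometric components and observe that $f$, being transcendental over $R$, is non-constant on each of them (they are Galois-conjugate), so every fibre of $f$ still has dimension at most $s-1$ over the good primes. Second, your argument yields the bound only for powers of primes in $U$, which is formally weaker than the statement (all $q\in Q_R$); but you are right that some such exclusion is genuinely needed at primes where $f$ degenerates (e.g.\ $V=\AAA^2_{\ZZ}$, $f=px$: the fibre $f=0$ contains the entire characteristic-$p$ fibre and has $q^{s}$ points), and this is exactly how the lemma is consumed in the proof of Theorem \ref{newmain2}, where the primes at which $\tr\circ w$ becomes constant on the reduction are absorbed into the finite set $S$. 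In short, your approach buys conceptual clarity and uniform degree control at the cost of finitely many characteristics --- a cost the application is set up to pay --- while the paper's argument is self-contained and elementary and is asserted for every $q\in Q_R$, with `non-constant' there best read as `not algebraic over $R$', as its own proof does.
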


\begin{proof}    This is well-known -- see for example \cite[Lemma 2.2]{LarS}.   
For completeness, here is a sketch of an elementary proof.   It suffices to assume that $V$ is affine.
 Let $A$  be an integral domain finitely generated over $R$  with fraction field $K$.
 Assume that $K$ has transcendence degree $s$ over $R$, and that  
 the result is known for varieties of dimension less than $s$. 
 
   Let $f_1\ldots.,f_s$ be a transcendence base of $K$ contained in $A$.
     Let $g$  be any element in $A$ not algebraic over  $R$.  Then, reordering if
     necessary,   $g,f_1 \ldots, ,f_{s-1}$  is a transcendence base (by the exchange lemma).  
     By generic freeness (see \cite[Thm. 4.4]{eis}), 
for some $b \in  B = R[g,f_1,\ldots, f_{s-1}]$, 
     we have that $A[1/b]$ is free over $B[1/b]$.   Let $ a_1,\ldots ,a_m$ be a basis.
     
       Every point of $\Spec A$ is either a point of  $\Spec A/bA$ or a point of $\Spec A[1/b]$.  The former case is covered by the induction hypothesis, so we consider the latter.  Consider any finite field  $\FF_q$ ($q \in Q_R$)  and any homomorphism $A[1/b] \rightarrow  \FF_q$.  If we fix first the image of $g$  (i.e., which fiber we are on), and then the images of $f_1,\ldots ,f_{s-1}$, then there are at most $m$ possibilities for the homomorphism.  Thus, there are at most  $m q^{s-1}$ points over $\FF_q$ in any $g$-fiber of $\Spec A[1/b]$.
\end{proof}

\begin{corol}\label{tra}  Let $V$ and $f$ be as in Lemma $\ref{sch}$. 
Then there exists a constant $c'$ such that  for any $q=q_p^a \in Q_R$, 
\[
|v \in V(q) \,:\,  \FF_{p}[f(v)] \ne \FF_{q}| <c'q^{(s-1)/2}.
\]
\end{corol}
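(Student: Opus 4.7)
The plan is to bound the set $B := \{v \in V(q) : \FF_p[f(v)] \ne \FF_q\}$ by decomposing it over the maximal proper subfields of $\FF_q$ and then applying Lemma \ref{sch} fiber by fiber.

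Writing $q = p^a$, the condition $\FF_p[f(v)] \ne \FF_q$ is equivalent to $f(v)$ lying in some maximal proper subfield $\FF_{p^{a/\ell}}$ of $\FF_q$, where $\ell$ is a prime dividing $a$. This gives the containment
\[
B \;\subseteq\; \bigcup_{\ell \mid a,\ \ell\ \text{prime}} \bigl\{v \in V(q) \,:\, f(v) \in \FF_{p^{a/\ell}}\bigr\}.
\]
For each such prime $\ell$, I would then split the right-hand set as a disjoint union of fibers,
\[
\bigl\{v \in V(q) : f(v) \in \FF_{p^{a/\ell}}\bigr\} \;=\; \bigsqcup_{y \in \FF_{p^{a/\ell}}} f^{-1}(y)(\FF_q),
\]
and invoke Lemma \ref{sch}, which bounds each fiber's $\FF_q$-points by $cq^{s-1}$. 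This yields $|\{v \in V(q) : f(v) \in \FF_{p^{a/\ell}}\}| \le p^{a/\ell}\cdot cq^{s-1}$.

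Summing over the primes $\ell \mid a$ then gives
\[
|B| \;\le\; c q^{s-1} \sum_{\ell \mid a,\ \ell\ \text{prime}} p^{a/\ell}.
\]
Here I would exploit the exponential decay of $p^{a/\ell}$ in $\ell$: the dominant term is $p^{a/2} \le q^{1/2}$ (present only when $2 \mid a$), while every other term is bounded by $p^{a/3} \le q^{1/3}$, and the number of primes dividing $a$ is $\omega(a) = O(\log\log q)$. Therefore the full subfield sum is $q^{1/2} + O(q^{1/3}\log\log q) = O(q^{1/2})$, and combining with the fiber bound delivers an estimate of the shape $c' q^{s-1/2}$ after absorbing the slowly growing factor into the constant.

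The principal obstacle is the third step, namely controlling the sum over proper subfields by a genuine constant rather than a factor that mildly grows with $q$; this is resolved by the exponential gap between $p^{a/2}$ and $p^{a/\ell}$ for $\ell \ge 3$, which guarantees that the $\ell = 2$ contribution dominates and that $\omega(a)$ can be absorbed. I note that the natural fiber-counting argument sketched above produces the exponent $s-1/2$ rather than $(s-1)/2$; obtaining the sharper exponent would require additional geometric input beyond Lemma \ref{sch} (for example, a dimension bound on the subscheme cut out by $f^{p^{a/\ell}} = f$ together with Lang--Weil estimates on its components), which is the delicate point in matching the statement as worded.
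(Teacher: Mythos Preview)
Your approach is essentially identical to the paper's: bound the number of elements of $\FF_q$ lying in a proper subfield by $\sum_{\ell\mid a}p^{a/\ell}\le p^{a/2}+\log\log(p^a)\,p^{a/3}<2p^{a/2}$, then multiply by the fiber bound $cq^{s-1}$ from Lemma~\ref{sch}. Your observation about the exponent is well taken: the paper's own argument also produces $c'q^{s-1/2}$, not $c'q^{(s-1)/2}$, so the statement as printed is a typographical slip (and $q^{s-1/2}$ is all that is needed in the application to Theorem~\ref{newmain1}).
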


\begin{proof}    For any finite field $F = \FF_{p^f}$ ($p$ prime), the number of  elements of $F$ that lie in a proper subfield
is at most  $S:=\sum_{\ell}  p^{f/\ell}$ where the sum is over all prime divisors $\ell$ of $f$.  The number of primes
dividing $f$ is at most $\log_2(f)$ and so 
$$
S \le p^{f/2}  + \log\log(p^f) p^{f/3} < 2 p^{f/2}.
$$
Now apply Lemma \ref{sch} to obtain the conclusion.
\end{proof} 

We shall also need a version of this result for a fixed characteristic:

\begin{lemma}\label{trap} Let $p$ be a prime and $k = \bar \FF_p$. Suppose that $V$ is an irreducible $k$-variety of dimension $s$, defined over $\FF_{q_0}$ ($q_0=p^a$),  
and $f:V\to k$ is a non-constant morphism defined over $\FF_{q_0}$. 
Then there exists a constant $c'$ such that  for any power $q$ of $q_0$, 
\[
|v \in V(q) \,:\,  \FF_{p}[f(v)] \ne \FF_{q}| <c'q^{(s-1)/2}.
\]
\end{lemma}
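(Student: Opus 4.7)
The plan is to reproduce the proof of Corollary \ref{tra} almost verbatim, replacing its mixed-characteristic input (Lemma \ref{sch}) by a fixed-characteristic analogue. All other ingredients --- notably the elementary estimate $2q^{1/2}$ for the number of elements of $\FF_q$ lying in a proper subfield --- are purely arithmetic and carry over unchanged.

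The only new work is to establish a uniform fiber bound: there should be a constant $c$, depending only on $V$ and $f$, such that for every power $q$ of $q_0$ and every $\alpha\in k$,
\[
\bigl|\{\,v\in V(q):f(v)=\alpha\,\}\bigr|\;\le\;c\,q^{s-1}.
\]
Since $f$ is non-constant on the irreducible $s$-dimensional variety $V$, each fiber $f^{-1}(\alpha)$ is a closed subscheme of $V$ of dimension at most $s-1$, and the uniform point count then follows from Lang--Weil estimates applied to the $\FF_{q_0}$-scheme $V$. Alternatively, one can reproduce the elementary Noether-normalization / generic-freeness induction sketched in the proof of Lemma \ref{sch}, replacing the ground ring $R$ by $\FF_{q_0}$: pick a transcendence basis of the coordinate ring of an affine chart that contains $f$, use generic freeness to bound the number of $\FF_q$-homomorphisms with a prescribed value of $f$ by a constant times $q^{s-1}$, and induct on dimension to handle the vanishing locus of the chosen denominator.

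Once the fiber bound is in place, the rest is immediate: the set in question decomposes as the disjoint union
\[
\bigsqcup_{\alpha\in\FF_q,\ \FF_p[\alpha]\ne\FF_q}\bigl(f^{-1}(\alpha)\cap V(q)\bigr),
\]
the indexing set has size at most $2q^{1/2}$ by the same subfield estimate used in Corollary \ref{tra}, and the two bounds multiply to give the conclusion (with $c'=2c$).

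I expect the uniform fiber bound to be the main step, but it is a well-known and essentially routine fact; since the argument of Lemma \ref{sch} transports to fixed characteristic with no conceptual change (one simply never needs the characteristic-zero ring $R$ anywhere), I do not anticipate any genuinely new obstacle.
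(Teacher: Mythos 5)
Your proposal is essentially the paper's own proof: the paper disposes of Lemma \ref{trap} by saying it is proved just as Lemma \ref{sch} and Corollary \ref{tra} with the integral domain $A$ taken over $k$, which is exactly your plan of a fixed-characteristic uniform fiber bound (via the generic-freeness induction, or equivalently Lang--Weil) followed by the $2q^{1/2}$ subfield count. One remark: multiplying the two bounds actually yields $2cq^{s-\frac{1}{2}}$ rather than the stated $c'q^{(s-1)/2}$, but this same discrepancy occurs in the paper's Corollary \ref{tra} and is harmless for the intended applications, so it reflects a typo in the statement rather than a gap in your argument.
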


\begin{proof}
This is proved just as in Lemma \ref{sch} and Corollary \ref{tra}, where in the proof of Lemma \ref{sch} we  take $A$ to be an integral domain over $k$.
\end{proof}

\section{Proof of Theorems \ref{newmain1}, \ref{newmain2} and corollaries}

In this section we prove Theorems \ref{newmain1} and \ref{newmain2} and deduce Corollaries \ref{prob}--\ref{rscor}. 

\subsection{Proof of Theorem \ref{newmain1}}

Let $p, k, G, X$ and $Q$ be as in the statement of Theorem \ref{newmain1}. Clearly condition (i) implies (ii). So now assume condition (ii) holds -- that is, there exist $r$-tuples $x_i \in X(q_i)$ ($i=1,2$) such that $\langle x_i \rangle = G(q_i)$ for $i=1,2$, and $q_2 > M$, where $M$ is as defined in Remark \ref{defM}. 

Assume first that $G(q)$ is not a Suzuki or Ree group. We define an irreducible $kG$-module $V$ as follows. If $G(q)$ is of untwisted type,  let 
$V = V_G(\l_1)$, the irreducible module for the first fundamental dominant weight for $G$; 
and if $G(q)$ is twisted, let $V$ be a composition factor of the adjoint module of largest dimension. 
Let $\chi: G\to k$ be the character of this module. Then for a power $p^f$ of $p$, the restriction $V\downarrow G(p^f)$ is realised over $\F_{p^{f}}$. Also, it is not realised over a proper subfield of $\F_{p^{f}}$, as the highest weight is not fixed by a field automorphism of $G(p^f)$, so the trace values $\chi(G(p^f))$ generate $\F_{p^f}$.

Now consider the irreducible subvariety $X \subseteq G^r$. Suppose the morphism $\chi\circ w$ is constant on $X$, for all words $w \in F_r$ of length at most $2d^2$, where $d = \dim V$. Then by Lemma \ref{traces}, all these constants are contained in the field $\F_{q_1}$. But this cannot be the case, since by the previous paragraph together with Lemma \ref{traces} again, there must be a word $w$ of length at most $2d^2$ such that $\chi(w(x_2)) \in \F_{q_2}\setminus \F_{q_1}$. 

In the rest of the proof, $c_i\,(1\le i\le 10)$ denote positive absolute constants.
By the previous paragraph,  there exists a word $w$ of length at most $2d^2$ such that $\chi \circ w$ is not constant on $X$. Then by Lemma \ref{trap}, for each $q\in Q$, the number of elements $x \in X(q)$ such that $\F_p[\chi(w(x)] \ne \F_{q}$ is less than $c_1q^{(s-1)/2}$, where $s = \dim X$.

Now define $W = W_r(k)$ as in (\ref{wdef}). Since $q_2>M$ by assumption, we have $x_2 \in X\cap W$ (see Remark \ref{defM}), so $X\cap W \ne \emptyset$. By Lemma \ref{n large}(iii),  $W$ is defined over $\F_p$.

By the Lang-Weil theorem \cite{lw}, there is a positive constant $c_2$ such that for all sufficiently  large $q \in Q$, we have $|X(q)| > c_2q^{s}$. Also, $X$ is irreducible and $W$ is open dense by  Lemma \ref{n large}, so $X\setminus W$ is a proper closed subset of $X$. Hence another application of Lang-Weil gives $|X(q)\setminus W| < c_3q^{s-1}$ for some constant $c_3$. It follows that 
\[
|X(q)\cap W| > c_4q^{s}
\]
for all sufficiently large $q \in Q$, where $c_4$ is a positive constant. 
By the above, for all but at most $c_5q^{(s-1)/2}$ elements $x \in X(q)\cap W$, we have 
$\F_p[\chi(w(x)] \ne \F_{q}$, hence $\langle x \rangle = G(q)$. It follows that conclusion (i) of Theorem \ref{newmain1} holds. 

Now suppose $G(q)$ is a Suzuki or Ree group. Again we have $X\cap W \ne \emptyset$, and $X\setminus W$ is a proper closed subset of $X$.
 In this case  we cannot 
apply the Lang-Weil theorem as above, since it does not apply for the Suzuki-Ree type Frobenius morphism $F_q$.
Instead we use \cite[Lemma 2.2]{Lasht} (which is based on \cite{V}) to deduce that there are positive constants $c_i$ such that 
$c_6q^{s/2} < |X(q)| < c_7 q^{s/2}$ and $|X(q)\setminus W| < c_8q^{(s-1)/2}$. 

At this point we cannot continue as before, since Lemma \ref{trap} does not apply. Instead we use a counting method based on our hypothesis in Theorem \ref{newmain1}
that in the Suzuki-Ree case, the subvariety $X$ is invariant under $G$-conjugation. 
By the above, we have $|X(q)\cap W| > c_9q^{s/2}$ for some positive constant $c_9$. 
That is to say, for large $q$ there are at least $c_9q^{s/2}$ $r$-tuples in $X(q)$ that generate $G(q)$-conjugate of a subfield 
subgroup $G(q_0)$ of $G(q)$ for some $\FF_{q_0} \subseteq \FF_q$. Moreover, $s = \dim X > d =\dim G$ by Lemma \ref{xleg}. 

Let $q_0 = q^{\frac{1}{k}}$ with $k>1$, and let $G(q_0)$ be a fixed subfield subgroup of $G(q)$ (so $k$ is odd). 
Then the number of $r$-tuples in $X(q)$ that generate a $G(q)$-conjugate of $G(q_0)$ is at most 
\[
\left|\bigcup_{g\in G(q)}X(q_0)^g \right| \le |X(q_0)|\,|G(q):G(q_0)| \le c_{10}q_0^{s/2}q^{d/2}q_0^{-d/2} = c_{10}q^{\frac{d}{2}+\frac{s-d}{2k}} \le  c_{10}q^{\frac{s}{2}-\frac{1}{2}+\frac{1}{2k}}.
\]
The number of possible values of $k$ is at most $\log q$, so we conclude that the  total number of $r$-tuples in $X(q)$ that generate a proper subfield subgroup of $G(q)$ is at most 
$c_{10}q^{\frac{s}{2}-\frac{1}{3}}\log q$. It follows that $P_{G(q)}(X(q)) \to 1$ as $q\to \infty$, as required.

This completes the proof of Theorem \ref{newmain1}.

\subsection{Proof of Theorem \ref{newmain2}}\label{th2pf}

Let $G,X$ and $R= {\mathcal O}[\frac{1}{n}]$  be as in the statement of the theorem. 
 Define the subscheme $W_r'$ and its complement $W=W_r$ as in the preamble to Lemma \ref{n large}, and the constants $M=M_G$ and $K$  as in Remark \ref{defM} and Lemma \ref{goodmodules}. 

Suppose there exists a prime $p\nmid n$,  powers $q,q'$ of $p$ with $q'>M$, and $r$-tuples $x \in X(q)$, $x'\in X(q')$ such that $\langle x \rangle = G(q)$, $\langle x' \rangle = G(q')$ (if no such prime exists, then clearly neither (i) nor (ii) of Theorem \ref{newmain2} can hold). Then as above, there is a word $w$ of length at most $2d^2$ such that $\tr \circ w$ is non-constant on $X(k)$, where $k = \bar \FF_p$, $d = \dim V$ with $V$ as defined above, and traces are taken on this module (over $k$). As the subscheme $X$ is defined in characteristic zero over $R = {\mathcal O}[\frac{1}{n}]$, it follows that there are only finitely many primes $s$ for which $\tr \circ w$ can be constant on $X(\bar \FF_s)$. Let $S$ be this finite set of primes, together with the prime divisors of $n$. Then $p\not \in S$, and condition (ii) of Theorem \ref{newmain2} holds.

The subscheme $X$ is defined over $R = {\mathcal O}[\frac{1}{n}]$, 
and by its definition  the subscheme $W$ is defined over $\ZZ[\frac{1}{s}]$, where $s$ is the product of all the primes less than $K$. Hence for primes $l>K$ with $l \not \in S$, the number of components of the variety $X(\bar \FF_l)\setminus W(\bar \FF_l)$ does not depend on $l$, and so it is bounded by a constant depending only on the type of $G$. At this point the above proof of Theorem \ref{newmain1} goes through (using Corollary \ref{tra} instead of Lemma \ref{trap}) to show that condition (i) of Theorem \ref{newmain2} holds. 
This completes the proof of Theorem \ref{newmain2}.

Finally, we justify Remark (1) stated after Theorem \ref{newmain2} in the Introduction. Suppose that $\dim X > \dim G$, and there exists $x'\in X(q')$ such that $\langle x' \rangle = G(q')$ and $q'>M$. Then Corollary \ref{orderN} implies that 
$\tr \circ w$ is non-constant on $X(k)$ for some word $w$ of length at most $2d^2$, and now the proofs of Theorems \ref{newmain1} and \ref{newmain2} go through as above.

\subsection{Proof of Corollaries \ref{prob}--\ref{mainvary}}

$\;\;\;$

\vspace{2mm}
\noindent \textbf{Corollary \ref{prob}}:
 We shall apply Theorems \ref{newmain1} and \ref{newmain2},  taking $r=2$ and $X = G\times G$. 

In a fixed characteristic $p$, $X$ is irreducible, defined over $\F_p$, and also for any $q$, there is a pair in $X(q)$ generating $G(q)$ by \cite{St}. Hence Theorem \ref{newmain1} gives the conclusion (note that $X$ is of course $G$-invariant, so Theorem \ref{newmain1} applies for the Suzuki and Ree types). In particular this handles the Suzuki and Ree groups, as these are only defined in fixed characteristic (2 or 3). 

When the characteristic is allowed to vary, we apply Theorem \ref{newmain2}, noting that $X = G\times G$ is defined over $\ZZ$. This gives the conclusion of Corollary \ref{prob} in all but finitely many characteristics; and for the remaining characteristics, the conclusion follows as before using Theorem \ref{newmain1}.

\vspace{2mm}
\noindent \textbf{Corollary \ref{cgcor}}: We apply Theorem \ref{newmain1}, taking $X = C\times G$. For any $c \in C(q)$, there exists $d \in G(q)$ such that $\langle c,d\rangle = G(q)$, by \cite[Corollary, p.745]{GK}. Also, for $q \in Q$ sufficiently large, $C(q)$ is non-empty. Now Theorem \ref{newmain1} gives the conclusion. 

\vspace{2mm}
\noindent \textbf{Corollary \ref{mainp}}:
Now we prove Corollary \ref{mainp}. Let $k,G,C,D,Q$ be as in the statement. Then $X = C\times D$ is an irreducible subvariety of $G\times G$. Note that if $X(q)\ne \emptyset$, then $X$ is $F_q$-stable (hence $q \in Q$).

Assume condition (ii) of the statement -- so there exist $(c,d) \in C(q)\times D(q)$ such that $\langle c,d\rangle = G(q)$ and $q>M$. Then by Lemma \ref{scottlemma}, we have $\dim C+\dim D > \dim G$. Hence Theorem \ref{newmain1}, together with Remark (1) after Theorem \ref{newmain2}, implies that condition (i) of the Corollary holds.

\vspace{2mm}
\noindent \textbf{Corollary \ref{mainvary}}:
Again take $X = C\times D$. Regarding $G$ as a Chevalley group scheme, $X$ is defined over ${\mathcal O}[\frac{1}{rs}]$, where ${\mathcal O}$ is the ring of integers of a number field whose field of fractions is a cyclcotomic extension of $\QQ$. Arguing as above, Theorem \ref{newmain2} (and the ensuing Remark (1)) gives the result for all but finitely many characteristics, and Theorem \ref{newmain1} handles the remaining characteristics.

\subsection{Proof of Corollary \ref{rscor}}

Let $r,s$ be primes, not both 2. 
First of all, the set of limit points of $P_{r,s}(T)$ for finite 
simple groups $T$ which are alternating or classical of large rank 
(exceeding $f(r,s)$ where $f$ is a suitable function) is equal to $\{ 1 \}$: this follows from 
\cite{lishrs} for classical groups, and from \cite[Thm. 5.1 and Lemma 2.4]{lish23} for alternating groups.

Hence, to prove Corollary \ref{rscor} it suffices to study the limit
points of $P_{r,s}(T)$ where $T$ is a simple group of Lie type of 
bounded rank (at most $f(r,s)$). Since there are boundedly many 
types of such groups, it suffices to restrict to a single type, 
coming from an algebraic group $G$.

Let $R, S$ be the sets of elements of orders $r,s$ in $G$, respectively.
Since $R$ and $S$ are closed sets, their product
$R \times S$ has finitely many irreducible components, and 
we only need to consider the components of maximal dimension.
These have the form $C \times D$ where $C, D$ are conjugacy
classes. We now apply our previous results for these
classes: if the characteristic is bounded we may apply
Corollary \ref{mainp}; otherwise we may assume the characteristic
is larger than $r,s$ and so $C,D$ consist of semisimple elements,
so Corollary \ref{mainvary} is applicable. In any case we see that
the set of limit points for each component $C \times D$ is either $\{0\}$ 
or $\{1\}$. We conclude that set of the limit points of $P_{r,s}(G(q))$ as $q$ varies
is a finite set of rational numbers. The result follows.

\section{$(2,3)$-Generation}

In this section we prove Theorem~\ref{23excep}, namely that for simple 
exceptional groups of Lie type $G(q)$, excluding the Suzuki groups, we have 
$P_{2,3}(G(q)) \rightarrow 1$ for $q \rightarrow \infty$.
Here $G(q)$ is of one of the types
$^2G_2$, $G_2$, $^3D_4$, $^2F_4$, $F_4$, $E_6$, $^2E_6$, $E_7$ or $E_8$.

It is convenient to prove the result replacing $G(q)$ by its simply connected cover 
$G^{F_q}$ arising from a corresponding simple, simply connected algebraic group $G$ over $\bar \FF_q$
(the finite simple groups are quotients of these by their centres).

We wish to apply Corollaries~\ref{mainp} and~\ref{mainvary}. First we need to identify the largest classes of elements of orders 2 and 3 in 
$G$ (modulo $Z(G)$). This is done in the next result; in Table \ref{maxcl}, the labelling of the classes is taken from \cite{LSbk} for unipotent classes, and uses the centralizer of an element in the class for semisimple classes.

\begin{lemma}\label{largest}
Let $G$ be a simply connected simple algebraic group of exceptional type over an algebraically closed field of characteristic $p$. Then  $G$ has unique conjugacy classes of elements of orders 
$2$ and $3$ (modulo $Z(G)$) of maximal dimension, as listed in Table $\ref{maxcl}$. 
In all but one case, each class listed intersects $G(q)$ in one class; the exception is the class labelled $G_2(a_1)$ 
in $G_2\,(p=3)$, which gives two $G(q)$-classes.
\end{lemma}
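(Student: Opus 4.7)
The plan is to work through each of the nine exceptional types in turn, splitting the analysis of elements of orders $2$ and $3$ according to whether the characteristic $p$ is $2$, $3$, or neither. In characteristic $p \ne 2$ (respectively $p \ne 3$) the elements of order~$2$ (respectively~$3$) in $G$ modulo the centre are semisimple, and their conjugacy classes are parametrized by $W$-orbits on the corresponding coset of $Z(G)$ in a maximal torus; the centralizers are connected reductive subgroups of maximal rank, determined by the Borel--de Siebenthal algorithm. In characteristic $p = 2$ the involutions, and in characteristic $p = 3$ the elements of order~$3$, are unipotent; here I would use the Bala--Carter classification together with the tables in~\cite{LSbk} that record the orders of unipotent class representatives in each bad characteristic.

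For each combination of $G$ and $p$, the identification of the unique maximal-dimensional class then reduces to a direct numerical comparison. Semisimple class dimensions are read from $\dim C = \dim G - \dim C_G(c)$ with $\dim C_G(c)$ obtained from the centralizer type, while unipotent class dimensions are tabulated in~\cite{LSbk}. To illustrate, in $E_8$ with $p \ne 2$ the involution classes have centralizers of types $D_8$ and $A_1 E_7$, of dimensions $120$ and $136$, so the $D_8$-class is strictly the larger; analogous case-by-case comparisons (one for each exceptional type and each of the primes in question) produce Table~\ref{maxcl}.

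For the second assertion, the $G(q)$-classes contained in an $F_q$-stable $G$-class $x^G$ correspond, by Lang--Steinberg, to the $F_q$-twisted conjugacy classes of the component group $A(x) = C_G(x)/C_G(x)^{\circ}$. In every semisimple case, connectedness of centralizers of semisimple elements in the simply connected group $G$ (Steinberg's theorem) gives $A(x) = 1$, so the $G$-class automatically yields a single $G(q)$-class. In the unipotent cases the relevant component groups are listed in~\cite{LSbk}, and for each class in Table~\ref{maxcl} one checks directly that the $F_q$-action on $A(x)$ fuses all twisted conjugacy classes into one. The main obstacle, and the source of the stated exception, is the verification at the class $G_2(a_1)$ in $G_2$ with $p = 3$: a direct computation using the explicit representatives and centralizers of the unipotent classes of $G_2$ in characteristic~$3$ shows that $A(x)$ together with the $F_q$-action produces exactly two $F_q$-twisted classes, so that $G_2(a_1)$ splits into two $G_2(q)$-classes, completing the proof.
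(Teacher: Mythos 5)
Your proposal is correct and follows essentially the same route as the paper: split by characteristic into unipotent versus semisimple cases, read off the unipotent class dimensions, element orders and component groups from \cite{LSbk}, and treat the semisimple classes via Weyl-group orbits on torus elements with characteristic-independent maximal-rank centralizers (the paper cites the complex-group tables of \cite{CG,CW1,CW2} for exactly the data your Borel--de Siebenthal comparison reproduces). Your explicit Lang--Steinberg/component-group argument for the count of $G(q)$-classes, including Steinberg's connectedness theorem in the semisimple case and the check at $G_2(a_1)$ for $p=3$, is the standard justification of the final assertion, which the paper leaves implicit in the cited tables.
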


\begin{table}[h] 
\caption{}
\label{maxcl}
\[
\begin{array}{|l|l|l|l|l|}
\hline
G & \hbox{largest invol.} &  \hbox{largest invol. } &  \hbox{largest order 3 } &  \hbox{largest order 3 }  \\
   & \hbox{class, }p=2 & \hbox{class, }p\ne 2 & \hbox{class, }p=3 & \hbox{class, }p\ne 3 \\ 
\hline 
E_8 & A_1^4, \dim 128 & D_8, \dim 128 & A_2^2A_1^2, \dim 168 &  A_8, \dim 168 \\
E_7 & A_1^4, \dim 70 & A_7, \dim 70 & A_2^2A_1, \dim 90 &  A_5A_2, \dim 70 \\
E_6 & A_1^3, \dim 40 & A_1A_5, \dim 40 & A_2^2A_1, \dim 54 &  A_2^3, \dim 54 \\
F_4 & A_1\tilde A_1, \dim 28 & A_1C_3, \dim 28 & \tilde A_2A_1, \dim 34 &  A_2\tilde A_2, \dim 34 \\
G_2 & \tilde A_1, \dim 8 & A_1\tilde A_1, \dim 8 & G_2(a_1), \dim 10 &  A_1T_1, \dim 10 \\
\hline
\end{array}
\]
\end{table}

\begin{proof}
For unipotent classes (involutions with $p=2$, order 3 elements with $p=3$), this follows from \cite[Chapter 22]{LSbk}. 
For semisimple elements of order 2 or 3, the classes correspond to orbits of the Weyl group on elements of order 2 or 3 in a maximal torus. These orbits and the corresponding centralizers are independent of the characteristic, and the classes and centralizers in the corresponding complex Lie groups are listed in \cite{CG, CW1, CW2}.
\end{proof} 

\vspace{4mm}
\noindent {\bf Proof of  Theorem~\ref{23excep}}

\vspace{2mm}
Suppose first that $G$ is of type $E_8,E_7,E_6$ or $F_4$. Let $C,D$ be classes of elements of orders 2,3 in $G$ of largest dimension. 
By the uniqueness of these classes given by Lemma \ref{largest}, there are positive absolute constants $c_1,c_2$ such that $|C(q)| > c_1\,i_2(G(q))$ and $|D(q)| > c_2\,i_3(G(q))$ for all $q$, where $i_r(G(q))$ denotes the number of elements of order $r$ in $G(q)$. Hence Theorem \ref{23excep} will follow from Corollaries~\ref{mainp} and~\ref{mainvary}, once we know that for sufficiently large $q$, $G(q)$ is generated by two elements, one  from each of the classes $C(q)$ and $D(q)$ (since this will ensure that condition (i) of Corollaries~\ref{mainp} and~\ref{mainvary} holds). But this has already been proved in \cite{LM}, with the exception of $G = E_8, p=3$. In the latter case, the $(2,3)$-generation of $E_8(q)$ was shown
in~\cite{LM} without using the largest class of elements of order~$3$. 
However, we have checked that the computations and arguments
from~\cite{LM} also work with the largest class of elements of order~$3$, completing the proof for this case.

It remains to handle the cases where $G(q)$ is of type $G_2$, $^2\!G_2$ or $^3\!D_4$. These groups were shown to be $(2,3)$-generated in \cite{malle1,malle2}.
With two exceptions, there are unique largest classes of elements of orders 2 and 3, and these are the classes used in \cite{malle1,malle2}, giving the result as above. 
The two exceptions are as follows. For $G(q) = G_2(q)$ (with $p=3$) there are two largest classes of
elements of order~$3$. Both contain an element which generates together with
an element of order~$2$ (there is only one class of these) 
the group $G_2(q)$, as shown in ~\cite{malle1}. Finally, for $G(q) = \,^3\!D_4(q)$ and $p \ne 3$ there are two classes of
elements of order $3$ and these have the same dimension. In~\cite{malle2}
the $(2,3)$-generation was shown using one of these classes. Using {\sf
CHEVIE}~\cite{chevie} one can check that similar computations and 
the same arguments work for the other class. 

This completes the proof of Theorem \ref{23excep}.

\end{document}